\documentclass[12pt]{amsart}
\usepackage{amscd,amssymb,amsmath,latexsym,graphicx,amsxtra}
\usepackage[normalem]{ulem}
\usepackage[all]{xy}
\usepackage{xcolor}
\usepackage{tikz}

\usepackage{babel}
\usepackage{amsmath,amsfonts,amssymb,amsthm,amscd}
\usepackage{ucs} 
\usepackage[utf8x]{inputenc} 
\usepackage[T1]{fontenc} 
\usepackage{enumerate}
\usepackage{textcomp,url}
\usepackage{eucal}
\usepackage{xcolor}
\usepackage{yfonts}
\usepackage[noBBpl]{mathpazo}

\theoremstyle{definition}

\numberwithin{equation}{section}
\makeatletter

\renewcommand{\p@enumii}{}

\def\@enum@{\list{\csname label\@enumctr\endcsname}%
           {\usecounter{\@enumctr}\def\makelabel##1{
\normalfont\ignorespaces\emph{{##1}~}}
\setlength{\labelsep}{3pt}
\setlength{\parsep}{0pt}
\setlength{\itemsep}{0pt}
\setlength{\leftmargin}{0pt}
\setlength{\labelwidth}{0pt}
\setlength{\listparindent}{\parindent}
\setlength{\itemsep}{0pt}
\setlength{\itemindent}{0pt}
\topsep=3D3pt plus 1pt minus 1 pt}}

\makeatother

\renewcommand{\epsilon}{\ensuremath{\varepsilon}}
\renewcommand{\phi}{\ensuremath{\varphi}}

\renewcommand{\to}{\ensuremath{\longrightarrow}}
\renewcommand{\mapsto}{\ensuremath{\longmapsto}}

\makeatletter
\def\@map#1#2[#3]{\mbox{$#1 \colon\thinspace #2 \to #3$}}
\def\map#1#2{\@ifnextchar [{\@map{#1}{#2}}{\@map{#1}{#2}[#2]}}
\makeatother

\newtheoremstyle{theoremm}{}{}{\itshape}{}{\scshape}{.}{ }{}

\theoremstyle{theoremm}
\newtheorem{thm}{Theorem}
\newtheorem{lem}[thm]{Lemma}
\newtheorem{prop}[thm]{Proposition}

\newtheoremstyle{remarkk}{}{}{}{}{\scshape}{.}{ }{}

\theoremstyle{remarkk}
\newtheorem{defn}[thm]{Definition}
\newtheorem{rem}[thm]{Remark}

\newtheorem{ex}[thm]{Example}

\newtheoremstyle{comment}{}{}{\bfseries}{}{\bfseries}{:}{ }{}
\theoremstyle{comment}

\begin{document}
\title{Fixed point free homeomorphisms and the $R_{\infty}$-property}

\author{Daciberg Gon\c calves}
\address{Departamento de Matem\'atica, IME-Universidade de S\~ao Paulo Rua do Mat\~ao 1010 CEP: 05508-090 S\~ao Paulo-SP, Brazil}
\email{dlgoncal@ime.usp.br}
\author{Peter Wong}
\address{Department of Mathematics, Bates College, Lewiston, ME 04240, U.S.A.} 
\email{pwong@bates.edu}

\begin{abstract} Let $f:M\to M$ be a diffeomorphism on a compact connected smooth manifold of dimension at least $5$. It is known that the Nielsen number $N(f)$ of $f$ vanishes if and only if $f$ is isotopic to a fixed point free map. For any $n\ge 5$, there exists a compact $n$-dimensional nilmanifold $M$ such that {\it every} self homeomorphism on $M$ is isotopic to a fixed point free map. 
The proof depends on the fact that nilmanifolds are of {\it Jiang-type} and the fundamental group $\pi_1(M)$ has the property $R_{\infty}$ for certain nilmanifolds $M$.  
In this paper we construct the first known examples (an infinite family) of non-aspherical closed manifolds whose fundamental groups have property $R_{\infty}$ and that are also of Jiang-type. In particular, every self homeomorphism on such a manifold is isotopic to be fixed point free. The main objective of this work is to construct non-aspherical manifolds whose fundamental groups have property $R_{\infty}$ and that are also of Jiang-type.
\end{abstract}

\date{\today}

\maketitle

\section{Introduction}
 A space $X$ is said to have the fixed point property if every selfmap $f:X\to X$ has a fixed point, i.e., $f(x)=x$ for some $x\in X$. For instance, if the Lefschetz number $L(f)$ is nonzero for every $f$, then it follows from the celebrated Lefschetz fixed point theorem that $X$ has the fixed point property. Conversely, $L(f)=0$ does not guarantee that $f$ can be deformed to be fixed point free. For compact manifolds of dimension at least three, a classical theorem of Wecken asserts that the vanishing of the more subtle homotopy invariant, namely the Nielsen number $N(f)$, implies that $f$ is homotopic to a fixed point free map. From the point of view of dynamics, one is interested in knowing if {\it every} self homeomorphism 
 $f: X\to X$ can be deformed to be fixed point free.

In \cite{GW2}, it was shown that for every $n\ge 4$, there exists an $n$-dimensional compact nilmanifold $X$ such that {\it every} self homeomorphism is homotopic to a fixed point free homeomorphism. If $n\ge 5$ then the homotopy can be made to be isotopy. This result is based upon two important concepts, namely the property $R_{\infty}$ and the concept of a {\it Jiang-type} space. These manifolds were the first examples of an infinite family of Jiang-type spaces whose fundamental groups satisfy the so-called property $R_{\infty}$.

Let $\varphi: G\to G$ be a group endomorphism. The Reidemeister number $R(\varphi)$ is the number of $\varphi$-twisted conjugacy classes of elements of $G$ under the equivalence relation $\alpha \sim \sigma \alpha \varphi(\sigma)^{-1}$. In generalizing the classical Burnside-Frobenius theorem, A. Fel'shtyn and R. Hill \cite{FH} conjectured that $R(\varphi)=\infty$ if $\varphi$ is injective and $G$ has exponential growth. This conjecture has been proven to be false in general (see \cite{GW1}).
The study of $R(\varphi)$ in fact originated in the Nielsen fixed point theory where twisted conjugation arises naturally. A group $G$ is said to have property $R_{\infty}$ \cite{TW} if $R(\varphi)=\infty$ for every automorphism 
$\varphi\in {\rm Aut}(G)$. For a selfmap $f:X\to X$, the Reidemeister number $R(f)$ of $f$ is simply $R(\varphi)$ where $\varphi$ is the induced homomorphism on the fundamental group.  For manifolds of dimension at least $3$, the Nielsen number $N(f)$ is a sharp lower bound for the minimal number of fixed points in the homotopy class of $f$ and $N(f)\le R(f)$.

The computation of $N(f)$ is notoriously difficult. In 1963, Jiang introduced the Jiang subgroup $J(X) \subset \pi_1(X)$ (also known as the first Gottlieb group) of  the fundamental group. When $J(X)=\pi_1(X)$ then for every selfmap $f:X\to X$,  (i) $N(f)=0$ if $L(f)=0$ or (ii) $N(f)=R(f)$ if $L(f)\ne 0$. We say that a space is of {\it Jiang-type} if (i) or (ii) holds for all maps. Examples of Jiang-type spaces include simply-connected spaces, $H$-spaces, orientable coset spaces $G/K$ of compact connected Lie groups $G$ by closed subgroups $K$, nilmanifolds, certain solvmanifolds, and $\mathcal C$-nilpotent spaces where $\mathcal C$ is the class of finite groups. If $X$ is a Jiang-type space and $\pi_1(X)$ has property $R_{\infty}$ then it forces $N(f)=0$ for every self homeomorphism $f$ because $N(f)$ is always finite. Furthermore, if, in addition, $X$ is a compact manifold with $\dim X\ge 3$ then every self-homeomorphism is homotopic to be fixed point free. (For more background on Nielsen fixed point theory and related topics, see \cite{Jiang} and \cite{Br}.)

While many finitely generated groups are known to have property $R_{\infty}$, the only known examples of manifolds $M$ that are of Jiang-type with $\pi_1(M)$ having property $R_{\infty}$ are certain nilmanifollds and certain solvmanifolds (of type $\mathcal NR$) which are aspherical. Recently, in her dissertation \cite{vB}, van den Bussche constructed certain solvmanifolds for which every self homeomorphism has zero Nielsen number, extending certain results of \cite{GW1} (see also \cite{PY}). More precisely, she considered a class of spaces called $N_0$-spaces, i.e., the family of compact connected manifolds $M$ such that every self homotopy equivalence $f:M\to M$ has $N(f)=0$. Evidently, if $X$ is a compact connected triangulable manifold of dimension at least $3$ and $X$ is also an $N_0$-space then using a result of Jezierski \cite{Je}, it is straightforward to show that {\it every} self homeomorphism $f:X\to X$ is homotopic to a map $g$ with no periodic points of {\it any given} period.

The main purpose of this paper is to construct non-aspherical $N_0$-spaces and also $N_0$-spaces that are of Jiang-type. The goal is to focus on constructing 
manifolds whose fundamental groups are finitely generated virtually cyclic and have property $R_{\infty}$. The paper is organized as follows. In section 2, we recall some known results on virtually cyclic groups with property $R_{\infty}$ and some background on $N_0$-spaces, Jiang-type spaces, and $R_{\infty}$-spaces. In section 3, we construct $N_0$-spaces that are mapping tori of certain Lens spaces. In section 4, we consider coset spaces $U(n)/K$ of unitary groups by finite subgroups that are known to be of Jiang-type. Then we construct for each $n\ge 5$, an $N_0$-space of the form $U(n)/K_n$. In section 5, we show that for any finite subgroup $K$ of $U(2)$, the fundamental group $\pi_1(U(2)/K)$ does not have property $R_{\infty}$. In fact, we show that the space $U(2)/K$ does not have the property $R_{\infty}$.

The authors thank the CIRM (Luminy) for its support and hospitality under the "Research in Residence" program, February 25 - March 1, 2024 during which this  research  was conducted.  
The first  author  was partially supported   by Projeto  Regular  Fapesp Nielsen theory for maps  between spherical 3-manifolds and  some  homogeneous spaces  no. 2023/16051-5 and the travel for the second author was supported by a grant from Bates College.

\section{$R_{\infty}$- and  $N_0$-spaces}

In this section, we discuss $N_0$-spaces and the relationships among Jiang-type spaces and the $R_{\infty}$-property for groups and for spaces. We also recall the 
 $R_{\infty}$-property for virtually cyclic groups from \cite{GoWo} .

Let $X$ be a compact connected space.
\begin{defn} We say that   $X$ is an $R_{\infty}$-space if $R(f)= \infty$ for every homotopy equivalence $f : X \to  X$.
\end{defn}
Of course, if $\pi_1(X)$ has the $R_{\infty}$-property then $X$ is an $R_{\infty}$-space.
On the other hand, we will show later in this section that the
converse does not hold. From \cite{vB}  we have:
\begin{defn}   We say that $X$ is an $N_0$-space if $N(f)=0$ for
every homotopy equivalence
$f : X \to  X$.
\end{defn}
 For instance, $X = \mathbb RP^3\# \mathbb RP^3$ is an $R_{\infty}$-space but not an
$N_0$-space (see \cite[Theorem 7.3]{GWZ}).

Recall that a space $M$ is of {\it Jiang-type} if  for every map $f:M\to M$, either $L(f)=0 \Rightarrow N(f)=0$ or $L(f)\ne 0 \Rightarrow N(f)=R(f)$. Known Jiang-type spaces include $1$-connected spaces, $H$-spaces, generalized lens spaces, orientable $G/K$ of compact connected Lie group $G$ by closed subgroup $K$, nilmanifolds, $\mathcal C$-nilpotent spaces where $\mathcal C$ is the class of finite groups.

Evidently, if $M$ is of Jiang-type and is an $R_{\infty}$-space then $M$ is an $N_0$-space.

\subsection{Virtually Cyclic $R_{\infty}$-Groups}

Finitely generated non-elementary word hyperbolic groups have the $R_{\infty}$-property \cite{LL}. Non-elementary means not virtually cyclic so which (infinite) virtually cyclic groups have the $R_{\infty}$-property?

Let $G$ be a finitely generated virtually cyclic group so that

\begin{equation}\label{VC}
0 \to \mathbb Z \to G \to F \to 1
\end{equation}
is short exact with $F$ a finite group. Then $G$ falls into one of the following two types. Type I: if \eqref{VC} is not central; Type II if \eqref{VC} is central. If $G$ is of Type I then $G$ has property $R_{\infty}$ \cite[Prop. 2.8]{GoWo}.

Now suppose that $G$ is of Type II. Then $G\cong K\rtimes_{\theta} \mathbb Z$ for some finite group $K$ with action $\theta : \mathbb Z \to {\rm Aut}(K)$.
Necessary and sufficient conditions were given in \cite[Proposition 2.5]{GoWo} for $G$ to have property $R_{\infty}$. However, the statement is not entirely true and we take the opportunity to correct that. We thank Pieter Senden who pointed out  this error.

Proposition 2.5 of \cite{GoWo} states:
{\em Let $\mathbb Z = \langle t \rangle$, $H$ a finite group, $\theta(t)\in   {\rm Aut}(H)$, and $G = H\rtimes_{\theta} \mathbb Z$. Then $G$ has the $R_{\infty}$-property if, and only if, $\theta(t)$ is not conjugate to its inverse in ${\rm Aut}(H)$.}

The implication  {\em if $G$ has the $R_{\infty}$-property then  $\theta(t)$ is not conjugate to its inverse in ${\rm Aut}(H)$} holds and the proof given in \cite{GoWo} is correct. However, the  proof of the converse has a gap. The correct statement should  be  as  follows.  

\begin{prop}
\label{type2} 
Let $\mathbb Z = \langle t \rangle$, $H$ a finite group, $\theta(t)\in   {\rm Aut}(H)$, and $G = H\rtimes_{\theta} \mathbb Z$. Then $G$ has the $R_{\infty}$-property if, and only if, $[\theta(t)]$ is not conjugate to its inverse in ${\rm Out}(H)$.
\end{prop}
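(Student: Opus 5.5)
Since $H$ is finite and $\mathbb Z$ is torsion free, $H$ is exactly the set of torsion elements of $G$, hence characteristic. So every automorphism $\varphi$ of $G$ induces an automorphism $\bar\varphi$ of $G/H\cong\mathbb Z$, and $\bar\varphi(t)=t^{\epsilon}$ with $\epsilon=\pm1$. I will use the standard fact that for a characteristic subgroup with finite kernel, $R(\varphi)=\infty$ if and only if $R(\bar\varphi)=\infty$. Indeed, $R(\bar\varphi)\le R(\varphi)$ always holds. Conversely, each twisted class of $\bar\varphi$ is the image of at most finitely many classes of $\varphi$, because $|H|<\infty$: the classes mapping to the class of $g$ are represented by elements $hg$ with $h\in H$. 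On $\mathbb Z$, the identity gives $R=\infty$ and $-\mathrm{id}$ gives $R=2$. Hence $G$ has $R_\infty$ if and only if no automorphism of $G$ induces $t\mapsto t^{-1}$ on the quotient. The proof therefore reduces to showing that such an automorphism exists exactly when $[\theta(t)]$ is conjugate to $[\theta(t)]^{-1}$ in ${\rm Out}(H)$.

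Write $\alpha=\theta(t)$, so that $tht^{-1}=\alpha(h)$ in $G$.

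\emph{Suppose such a $\varphi$ exists.} Then $\varphi(t)=h_0t^{-1}$ for some $h_0\in H$, and $\beta=\varphi|_H\in{\rm Aut}(H)$. Apply $\varphi$ to the relation $tht^{-1}=\alpha(h)$. This gives $\beta(\alpha(h))=h_0t^{-1}\beta(h)th_0^{-1}=h_0\,\alpha^{-1}(\beta(h))\,h_0^{-1}$. Thus $\beta\alpha\beta^{-1}=\iota_{h_0}\alpha^{-1}$, where $\iota_{h_0}$ is the inner automorphism by $h_0$. In ${\rm Out}(H)$ this says $[\alpha]$ is conjugate to $[\alpha]^{-1}$.

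\emph{Conversely, suppose $\beta\alpha\beta^{-1}=\iota_{h_0}\alpha^{-1}$ for some $\beta\in{\rm Aut}(H)$ and $h_0\in H$.} Define $\varphi$ by $\varphi|_H=\beta$ and $\varphi(t)=h_0t^{-1}$. The computation above, read backwards, shows that the defining relations of the semidirect product are preserved. So $\varphi$ is an endomorphism of $G$. It is injective: it is $\beta$ on $H$, it is $t\mapsto t^{-1}$ on the quotient, and $H$ is the torsion subgroup. It is surjective, since its image contains $H$ and $t^{-1}$. Hence $\varphi$ is an automorphism inducing inversion on $\mathbb Z$, so $R(\varphi)<\infty$.

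The step needing the most care is the finiteness comparison $R(\varphi)<\infty \Leftrightarrow R(\bar\varphi)<\infty$. Only the direction ``$R(\bar\varphi)<\infty$ implies $R(\varphi)<\infty$'' requires the fibre argument. Each fibre has at most $|H|$ elements because $H$ is finite, so no finer counting via the Reidemeister numbers of $\iota_g\beta$ on $H$ is needed. The other point to watch is that the relation involves $\iota_{h_0}$. This inner twist is exactly why the criterion must be stated in ${\rm Out}(H)$ rather than ${\rm Aut}(H)$, which is the correction to \cite[Proposition 2.5]{GoWo}.
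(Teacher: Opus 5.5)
Your proof is correct and follows essentially the same route as the paper: use that the finite subgroup $H$ is characteristic to reduce $R_\infty$ to the non-existence of an automorphism inducing $t\mapsto t^{-1}$ on $G/H\cong\mathbb Z$. Such an automorphism must send $t$ to $h_0t^{-1}$ with an extra $h_0\in H$ (the paper's $\lambda$), which is exactly why the conjugacy has to be taken in ${\rm Out}(H)$. Your converse, which checks only the defining relation $\varphi(t)\beta(h)\varphi(t)^{-1}=\beta(\alpha(h))$, is a cleaner way to implement this correction than the paper's sketched closed formula for $\Phi(h,t^r)$.
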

Here, $[\varphi]$ is the image of an element $\varphi \in {\rm Aut}(G)$ 
under the projection homomorphism ${\rm Aut}(H) \to {\rm Out}(H)={\rm Aut}(H)/{\rm Inn}(H)$.

The mistake in the proof of \cite[Proposition 2.5]{GoWo} occurred  when we assumed that an automorphism $\Phi: G \to G$ is of the form  
 $\Phi(h,t^r) =(\phi(h),t^ {kr})$ for $h\in H$ and $t^r \in \mathbb Z$. In fact, an arbitrary  automorphism should be of the form  
 $\Phi(h,t^r) =(\phi(h)\lambda^r,t^ {kr})$ for an  arbitrary $\lambda\in H$. With this modification, it is straightforward  
 to adapt the proof of the converse  given in the proof of  \cite[Propostion 2.5]{GoWo}. This modification will complete the proof of Proposition \ref{type2}  

\begin{rem}\label{abelian-aut}
It follows from Proposition \ref{type2} that if ${\rm Aut}(H)$ is abelian then $G$ has $R_{\infty}$ iff $\theta (1)$ is non-trivial and $\theta (1)$ is not of order $2$. 
\end{rem}

\begin{ex}\label{e1}
Let $G=\mathbb Z_5 \rtimes_{\theta} \mathbb Z, H=\mathbb Z_5$ so ${\rm Aut}(H)\cong \mathbb Z_4$ where $ \theta : \mathbb Z \to {\rm Aut}(H)$ is multiplication by $2$. Since $\theta(1)$ has order $4$, it follows that $G$ has property $R_{\infty}$. Furthermore, every automorphism $\varphi :G\to G$ induces the identity on $G/H=\mathbb Z$.
\end{ex}

\subsection{$R_{\infty}$-property for spaces  versus $R_{\infty}$-property
 for groups}

While it is the case that $\pi_1(M)$ has $R_{\infty}$ implies that $M$ is an $R_{\infty}$-space, the converse is false in general.
We now construct spaces  $M$  such   that  its  fundamental   group does
not  have  the $R_{\infty}$-property but the  space has
the  $R_{\infty}$-property,  i.e. for  any  homotopy equivalence, it  has
the  property  that the induced homomorphism  on  the fundamental
group     has infinite Reidemeister number.     

\subsubsection{Construction type I}

First, we give a general construction of such a space with prescribed property on the action of $\pi_1$ on $\pi_2$.

\begin{thm}\label{R-infty-space} Let  $F$  be a finite group  and $H=F\rtimes
_{\theta} \mathbb Z$  with  the  property that  any  automorphism of $H$ induces  the
identity  on  the quotient  $H/F\simeq \mathbb Z$.
There is a space   $X$  which  has   the  property that $\pi_1(X)\simeq \mathbb Z$,
$\pi_2(X)\simeq F$  and  the  action  of $\pi_1(X)$ on  $\pi_2(X)$  is
given  by  $\theta$. Furthermore,
this space has the $R_{\infty}$-property  but  not its  fundamental  group.
\end{thm}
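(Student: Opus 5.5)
The idea is to take the mapping torus of a Moore space. Note first that $\pi_2(X)$ is abelian, so the statement implicitly requires $F$ to be a finite abelian group; we assume this throughout.

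\textbf{Construction of $X$.} Let $M=M(F,2)$ be a Moore space: a simply connected finite CW complex with $\tilde H_*(M)=F$ concentrated in degree $2$. It is finite because $F$ is finitely generated. By Hurewicz, $\pi_2(M)\cong H_2(M)=F$. For Moore spaces in degree $n\ge 2$, the map $[M(A,n),M(B,n)]\to {\rm Hom}(A,B)$ given by $H_n$ is surjective. Hence there is a map $g\colon M\to M$ with $g_*=\theta(t)$ on $H_2$. Since $g$ induces an isomorphism on all homology groups of a simply connected complex, Whitehead's theorem shows that $g$ is a homotopy equivalence.

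Let $X=T_g$ be the mapping torus of $g$. This is a compact connected CW complex of dimension at most $4$, fibering up to homotopy over $S^1$ with fiber $M$. From the long exact sequence, or van Kampen, we get $\pi_1(X)\cong\mathbb Z=\langle t\rangle$. The infinite cyclic cover of $X$ is the infinite mapping telescope of $g$, which is homotopy equivalent to $M$ because $g$ is a homotopy equivalence. This cover is simply connected, so it is the universal cover, and therefore $\pi_2(X)\cong\pi_2(M)\cong F$. The deck transformation $t$ acts on $\pi_2$ through $g_*=\theta(t)$. So the action of $\pi_1(X)$ on $\pi_2(X)$ is $\theta$.

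\textbf{$R_\infty$ for the space.} Let $f\colon X\to X$ be a homotopy equivalence, with a basepoint chosen and a path correction if necessary. Then $f$ induces an automorphism $f_\#$ of $\pi_1=\mathbb Z$, given by $t\mapsto t^{\epsilon}$ with $\epsilon=\pm1$. It also induces an automorphism $\psi$ of $\pi_2=F$. Naturality of the $\pi_1$-action on $\pi_2$ gives the equivariance $\psi(t^r\cdot x)=f_\#(t^r)\cdot\psi(x)$, that is,
\[
\psi\circ\theta^r=\theta^{\epsilon r}\circ\psi .
\]
Define $\Phi\colon H\to H$ by $\Phi(x,t^r)=(\psi(x),t^{\epsilon r})$. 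The displayed identity is exactly the condition for $\Phi$ to be a homomorphism of the semidirect product. It is bijective, so $\Phi$ is an automorphism of $H$, and the map it induces on $H/F\cong\mathbb Z$ is multiplication by $\epsilon$.

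By hypothesis every automorphism of $H$ induces the identity on $H/F$, so $\epsilon=1$. Hence $f_\#={\rm id}_{\mathbb Z}$. The twisted classes of the identity on an abelian group are the ordinary conjugacy classes, so $R(f)=R({\rm id}_{\mathbb Z})=|\mathbb Z|=\infty$. Thus $X$ is an $R_\infty$-space.

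\textbf{The group fails $R_\infty$.} On the other hand, $\pi_1(X)=\mathbb Z$ does not have property $R_\infty$. The automorphism $-1$ has twisted classes $\mathbb Z/2\mathbb Z$, so $R(-{\rm id})=2<\infty$.

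\textbf{Main obstacle.} The delicate step is the equivariance relation between $\psi$ and $f_\#$. One has to handle the basepoint carefully, since $f$ may move the basepoint and a change-of-path isomorphism must be composed in. That isomorphism only alters $\psi$ by the action of some element $t^s$, that is, by composing with $\theta^s$, and this preserves the displayed relation. The realizability of $\theta(t)$ by a self-map of $M(F,2)$ is standard.
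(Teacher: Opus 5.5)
Your argument is correct. Its core is the paper's own: naturality of the $\pi_1$-action on $\pi_2$ makes $(f_\#,\psi)$ an automorphism of $H=F\rtimes_\theta\mathbb Z$, so the hypothesis forces $f_\#=\mathrm{id}_{\mathbb Z}$ and hence $R(f)=\infty$, while $R(-\mathrm{id}_{\mathbb Z})=2$.

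The genuine difference is how you obtain $X$. The paper cites a general realization result of Spanier, which gives a space with $\pi_1\cong\mathbb Z$ and arbitrary prescribed $\mathbb Z[\pi_1]$-modules as higher homotopy groups. That is a one-line citation, but such a space is typically not compact. For instance, if $\pi_i=0$ for $i\ge 3$, the universal cover is a $K(F,2)$, which has homology in infinitely many degrees, so $X$ is not even homotopy equivalent to a finite complex. Yet Section 2 states its definitions for compact connected spaces. You instead take the mapping torus of a self-equivalence of the Moore space $M(F,2)$ realizing $\theta(t)$. This gives a finite $4$-dimensional CW complex, and it is the general version of the paper's Example~\ref{suspension} built from $\Sigma L(5,2)$.

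You also make explicit two points the paper leaves tacit:
\begin{itemize}
\item $F$ must be abelian for $\pi_2(X)\cong F$ to be possible.
\item The change-of-basepoint correction only composes $\psi$ with a power of $\theta$, so it preserves the identity $\psi\circ\theta^r=\theta^{\epsilon r}\circ\psi$.
\end{itemize}
Whether the shift deck transformation acts by $g_*$ or $g_*^{-1}$ is only a choice of generator of $\pi_1$.
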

\begin{proof}
Given a group  $G$ and a sequence of  $\mathbb Z[G]$-modules $A_i, i\geq 2$, it follows from \cite[Ex.3, p.460]{Sp} that there is a space  $X$  such  that
$\pi_1(X)\simeq  G$, $\pi_i(X)=A_i$ and the structure  of
$\mathbb Z[\pi_1(X)]$-module of $\pi_i(X)$ inherited from  the  space $X$ is
the given  $\mathbb Z[G]$-module  structure  of  $A_i$ for all $i\geq 2$.

Now let $X$ be such a space where $\pi_1(X)\simeq \mathbb Z$, $\pi_2(X)\simeq F$ such that 
the action of $\pi_1(X)$ on $\pi_2(X)$ is given by $\theta$.
Let  $f:X\to  X$  be  any
homotopy equivalence. The  action  of  $\pi_1$  on   $\pi_2$ is
natural   with  respect  to
maps so that $f_{\#}(\alpha\star
\beta))=f_{\#}(\alpha)\star f_{\#}(\beta)$ where $\alpha\in \pi_1(X), \beta \in \pi_2(X)$ and $\alpha \star \beta=(\theta(\alpha))(\beta)$. Applying  this
equality to the
homotopy equivalence $f$,  we  obtain two homomorphisms
$f_{\#1}$  and  $f_{\#2}$ which define an  automorphism of  the  group $H$.
By assumption, $f_{\#1}$ is  the  identity so $R(f)=R(f_{\#})\ge R(f_{\#1})=\infty$.  Since $\mathbb Z$ does not have property $R_{\infty}$, the  result  follows.
\end{proof}

\subsubsection{Construction type II}

Let $h:X\to X$ be a homeomorphism and $T_hX$ be the corresponding mapping torus. It follows that we have a fibration $X\to T_hX\to S^1$ over the circle and  that $G\simeq \pi_1(T_hX)$ has a presentation $G=\langle \pi_1(X), t\mid t\beta t^{-1}=h_{\#}(\beta), \beta \in \pi_1(X)\rangle$.

\begin{ex}\label{suspension}
Consider the Lens space $L(5,2)$. Using \cite{S}, there is a homeomorphism
   $f:L(5,2)\to L(5,2)$  which  induces an automorphism as multiplication by  $2$ on
$\pi_1(L(5,2))=\mathbb Z_5$ and  has  degree $-1$. Now  consider the homeomorphism $h=\Sigma f: X=\Sigma L(5,2)  \to
\Sigma L(5,2)$, the suspension of $f$. Observe  that this  space $X$ is simply connected and
$\pi_2(X)=\pi_2(\Sigma L(5,2))\simeq H_2(\Sigma L(5,2)) \simeq H_1(L(5,2)) \simeq \mathbb Z_5$. Now  if we  apply  the mapping
construction for the homeomorphism  $h=\Sigma f$, we obtain a space
$T_{h}X$ where $\pi_1(T_{h}X)\simeq \mathbb Z$, $\pi_2(T_{h}X)\simeq \mathbb Z_5$ and  the  action of
$\pi_1(T_{h}X)$ on  $\pi_2(T_{h}X)$  is  multiplication by
 $2$. From Example \ref{e1}, every automorphism of $\mathbb Z_5 \rtimes_{\theta} \mathbb Z$ induces the identity on the quotient $\mathbb Z$. It follows from Theorem 
 \ref{R-infty-space} that the space $T_hX$ has  the  $R_{\infty}$-property while $\pi_1(T_hX)\simeq \mathbb Z$ does not have.
\end{ex}

\section{Non-aspherical $N_0$-spaces, part I}

Let $X$  be a  compact connected manifold with $F=\pi_1(X)$ finite and $h:X\to X$ be a self homeomorphism. Denote by $M=T_{h}X$ the corresponding mapping torus. Then 
the natural projection  $M=T_{h}X\to S^1$ is  a fibre  bundle with fibre $X$.

Note that since $\pi_1(S^1)=\mathbb Z$ is free, we have $\pi_1(M)\cong F\rtimes \mathbb Z=\langle F, t \mid txt^{-1}=h_{\#}(x), x\in F\rangle$. Moreover, since $F=\pi_1(X)$ is finite, $F$ is characteristic in $\pi_1(M)$. If $\varphi: \pi_1(M)\to \pi_1(M)$ is an automorphism, we have the following commutative diagram
\begin{equation}\label{exact}
\begin{CD}
    1 @>>> \pi_1(X)    @>{i_{\#}}>>  \pi_1(M) @>{p_{\#}}>>    \mathbb Z @>>> 0 \\
    @.     @V{\varphi'}VV  @V{\varphi}VV   @V{\overline \varphi}VV                         @. \\
    1 @>>> \pi_1(X)    @>{i_{\#}}>>  \pi_1(M) @>{p_{\#}}>>    \mathbb Z @>>> 0 
 \end{CD}
\end{equation}
where the last vertical homomorphism is $\pm 1_{\mathbb Z}$. Again, since $F$ is finite, it follows from the diagram \eqref{exact} that $R(\varphi)=\infty$ if, and only if, $\overline \varphi=1_{\mathbb Z}$.

\begin{lem} \label{lens}
Suppose  $F\simeq \mathbb{Z}_p$ and $h_{\#}(l)=ql$ where $q^ 2\equiv -1 \  mod (p)$.  The group $\pi_1(M)$  has  the $R_{\infty}$--property  and so $\bar \varphi:\mathbb{Z}\to  \mathbb{Z}$ is the identity. 
\end{lem}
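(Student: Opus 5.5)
We reduce the statement to Proposition \ref{type2}, or more directly to Remark \ref{abelian-aut}. By the discussion preceding the lemma, $\pi_1(M)\cong \mathbb Z_p\rtimes_{\theta}\mathbb Z$, where $\theta(t)=h_{\#}$ is multiplication by $q$. Here $H=\mathbb Z_p$, so ${\rm Aut}(H)\cong(\mathbb Z_p)^{\times}$ is abelian and ${\rm Inn}(H)$ is trivial. Hence ${\rm Out}(H)={\rm Aut}(H)$ is abelian, and conjugation in it is trivial. Proposition \ref{type2} therefore says that $\pi_1(M)$ has $R_\infty$ if and only if $\theta(t)\neq\theta(t)^{-1}$, i.e.\ $\theta(t)^2\neq 1$. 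This is exactly Remark \ref{abelian-aut}.

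The key step is to check that multiplication by $q$ does not square to the identity. Its square is multiplication by $q^2\equiv -1 \pmod p$. This is the identity only if $-1\equiv 1 \pmod p$, i.e.\ $p\mid 2$. I will assume, as is implicit in the hypothesis, that $p>2$. For $p=2$ the congruence $q^2\equiv-1$ holds with $q=1$, $\theta$ is trivial, and $\mathbb Z_2\times\mathbb Z$ does not have $R_\infty$, so this case must be excluded. For $p>2$, $\theta(t)$ has order exactly $4$ in $(\mathbb Z_p)^\times$: it is nontrivial and not of order $2$. Remark \ref{abelian-aut} then gives that $\pi_1(M)$ has the $R_\infty$-property. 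If $p$ is not prime, the same argument works in $(\mathbb Z_p)^\times$, which is still abelian.

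Finally, for the statement about $\bar\varphi$: let $\varphi$ be any automorphism of $\pi_1(M)$ and use diagram \eqref{exact}. There $\bar\varphi=\pm1_{\mathbb Z}$, and as noted just before the lemma, $R(\varphi)=\infty$ if and only if $\bar\varphi=1_{\mathbb Z}$. This holds because $F$ is finite: if $\bar\varphi=-1$, then $R(\bar\varphi)=2$ and each fibre contributes finitely many classes, so $R(\varphi)<\infty$. Since $R(\varphi)=\infty$ for every automorphism by the first part, $\bar\varphi=1_{\mathbb Z}$.

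As a direct alternative, we can also verify this by hand. An automorphism with $\bar\varphi=-1$ sends $t$ to $\lambda t^{-1}$, so conjugation by $t^{-1}$, which is multiplication by $q^{-1}$, must agree with $\varphi'\circ(\text{mult. by }q)\circ\varphi'^{-1}=q$. That forces $q^2\equiv1$, contradicting $q^2\equiv-1$ when $p>2$. No step is a serious obstacle; the only delicate point is excluding $p=2$.
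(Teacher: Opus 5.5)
Your argument is correct and is essentially the paper's proof, which simply invokes Proposition~\ref{type2}: since ${\rm Out}(\mathbb Z_p)={\rm Aut}(\mathbb Z_p)$ is abelian, the $R_\infty$-property reduces to $q^2\not\equiv 1 \pmod p$, and $\bar\varphi=1_{\mathbb Z}$ then follows from the remark preceding the lemma. You are also right that $p=2$ must be excluded: there $q=1$ satisfies the hypothesis, but $\mathbb Z_2\times\mathbb Z$ does not have $R_\infty$, so the lemma as written tacitly assumes $p>2$.
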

\begin{proof} The result follows from Proposition \ref{type2}.
\end{proof}

\begin{prop} \label{mappingtorus}
Suppose  $F\simeq \mathbb{Z}_p$ and $h_{\#}(l)=ql$ where $q^ 2\equiv -1 \  mod (p)$. Let $f':M\to M$ be a homotopy equivalence. Then $f'$ is homotopic to a fibre preserving map $f:M \to M$ so that we have the following commutative  diagram\\  
\begin{equation}
\begin{CD}
    X    @>{i}>>  M @>{p}>>    S^1 \\
    @V{ f|_X}VV  @V{ f}VV   @V{\overline f}VV                         @. \\
    X    @>{i}>>  M @>{p}>>    S^1
 \end{CD}
\end{equation}
which, in turn, induces the commutative diagram \eqref{exact} at the fundamental group level 
where $\varphi=f_\#$  and  $\varphi'=(f|_{X})_{\#}$ and $\overline{\varphi}=\overline f_\#=1_{\mathbb Z.}$
\end{prop}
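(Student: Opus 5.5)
The plan is to use the fibration $X\to M\to S^1$ and the standard trick of homotoping a map into a circle, followed by a homotopy lifting argument. First I compose $f'$ with the projection to get $p\circ f':M\to S^1$. Since $S^1=K(\mathbb Z,1)$, homotopy classes of maps $M\to S^1$ correspond to homomorphisms $\pi_1(M)\to\mathbb Z$. By Lemma \ref{lens}, $\pi_1(M)$ has the $R_{\infty}$-property. Since $\mathbb Z_p$ is finite, hence characteristic, $f'_\#$ (after a choice of base path, which only changes it by an inner automorphism) induces $\overline{\varphi}=\pm 1_{\mathbb Z}$ on the quotient $\mathbb Z$. The discussion following \eqref{exact} shows that $R(\varphi)=\infty$ forces $\overline\varphi=1_{\mathbb Z}$. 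Hence $p_\#\circ f'_\#=p_\#$ as homomorphisms to the abelian group $\mathbb Z$, and base-point issues disappear because the target is abelian. Therefore $p\circ f'\simeq p$, or more generally $p\circ f'\simeq \overline f\circ p$ with $\overline f=\mathrm{id}_{S^1}$ (any degree-one map).

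Next I apply the homotopy lifting property of the fibre bundle $p:M\to S^1$. Take a homotopy $H:M\times I\to S^1$ from $p\circ f'$ to $\overline f\circ p$, and lift it starting at $f'$. The result is a homotopy $F:M\times I\to M$ with $F_0=f'$ and $p\circ F_1=\overline f\circ p$. Setting $f=F_1$, the map $f$ is fibre preserving over $\overline f$. In particular it sends the fibre $X=p^{-1}(*)$ to $p^{-1}(\overline f(*))$. Choosing $\overline f$ to fix the base point, for instance $\overline f=\mathrm{id}$, gives $f(X)\subset X$, so $f|_X:X\to X$ is defined and the stated diagram commutes.

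Finally I pass to fundamental groups, choosing base points in $X$ and using the fibre-inclusion basepoint. The commutative square of spaces gives the diagram \eqref{exact} with $\varphi=f_\#$, $\varphi'=(f|_X)_\#$ and $\overline\varphi=\overline f_\#=1_{\mathbb Z}$. Exactness of the rows comes from the long exact sequence of the fibration, which uses $\pi_2(S^1)=0$ and the connectivity of $X$.

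The main obstacle I expect is the bookkeeping of base points: $f'$ need not preserve the base point, so $f'_\#$ is only defined up to conjugation. This is harmless for the induced map on $\mathbb Z$ and for the Reidemeister-number conclusion, since the $R_\infty$-property concerns all automorphisms. A second point to check is that the homotopy-lifting step requires a Hurewicz fibration, which holds because a mapping torus is a locally trivial bundle over a paracompact base.
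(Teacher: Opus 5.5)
Your proposal is correct and is essentially the paper's own argument: the paper also deduces $p\circ f'\simeq p$ from $\overline{\varphi}=1_{\mathbb Z}$ (via Lemma \ref{lens}) and then gets the fibre-preserving map $f$ from the covering homotopy property of $M\to S^1$. You additionally spell out steps the paper leaves implicit: the identification $[M,S^1]\cong \mathrm{Hom}(\pi_1(M),\mathbb Z)$, why base-point changes are harmless, and why the bundle is a Hurewicz fibration.
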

\begin{proof}  Since  $f'$ is a homotopy equivalence, the map $p\circ f'$ is homotopic  to $p$ because  $\overline \varphi$ is the identity. We obtain the map $f$ using the Covering Homotopy Property of the fibration $M\to S^1$.
\end{proof}

The following result is immediate.
\begin{thm}\label{general-N_0}
Let $M$ and $f$ be as in Proposition \ref{mappingtorus}. The map $f$ can be deformed to a  fixed  point  free map and $M$ is an $N_0$-space.
\end{thm}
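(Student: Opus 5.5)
The plan is to use the fibre-preserving map $f$ of Proposition \ref{mappingtorus} and deform it to a fixed point free map directly, fibrewise over $S^1$. The key input is that the induced map $\overline f$ on the base induces $\overline\varphi = 1_{\mathbb Z}$ on $\pi_1(S^1)$. So $\overline f$ is homotopic to the identity of $S^1$, and hence also homotopic to a small rotation $\rho_\epsilon$, which has no fixed points.

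First, I homotope $\overline f$ to $\rho_\epsilon$ through maps $\overline f_s$. By the Covering Homotopy Property of the bundle $p: M\to S^1$, the homotopy $\overline f_s\circ p$ lifts to a homotopy $f_s$ starting at $f_0=f$ with $p\circ f_s=\overline f_s\circ p$. The end map $f_1$ then satisfies $p\circ f_1=\rho_\epsilon\circ p$. If $f_1(x)=x$, then $p(x)=\rho_\epsilon(p(x))$, which is impossible. So $f_1$ is fixed point free, and since $f$ is homotopic to the original $f'$, so is $f'$.

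For the $N_0$ statement, $N(f')$ is a homotopy invariant, and a map homotopic to a fixed point free map has $N=0$. Hence $N(f')=0$ for every homotopy equivalence $f'$, which means $M$ is an $N_0$-space. Alternatively, one can argue that $R(f')=\infty$ by Lemma \ref{lens}, but the direct deformation already suffices and needs neither the Jiang-type property nor any Lefschetz computation.

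The only point needing care is that Proposition \ref{mappingtorus} really gives $\overline f_\#=1_{\mathbb Z}$ and not $-1$. This is exactly what Lemma \ref{lens} supplies, via Proposition \ref{type2}: the condition $q^2\equiv -1 \pmod p$ makes $[\theta(t)]$ of order $4$ in the abelian group ${\rm Out}(\mathbb Z_p)={\rm Aut}(\mathbb Z_p)$, so $[\theta(t)]$ is not conjugate to its inverse. With that in hand, the rest is the routine homotopy lifting above.
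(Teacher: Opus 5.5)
Your proposal is correct and is exactly the argument the paper leaves implicit when it calls the theorem ``immediate'': since $\overline f_\#=1_{\mathbb Z}$, you homotope $\overline f$ to a fixed point free rotation of $S^1$ and lift that homotopy with the covering homotopy property to a fixed point free map. This gives $N(f')=0$ for every homotopy equivalence $f'$, without any appeal to the Jiang-type property.
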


Now we search for groups of the form  $G=F\rtimes_{\theta} \mathbb Z$ such  that any automorphism 
induces the identity on the quotient $G/F$. Note that such $G$ is virtually cyclic and necessarily has property $R_{\infty}$.  Once such a group is  given, we look for  a homeomorphism $h :X\to X$ such that  $\theta(1)=h_{\#}$. Classical lens spaces are natural candidates for such a space $X$. First, we recall some known results regarding realizing a degree $\pm 1$ map by a homeomorphism on three dimensional lens spaces. 

\begin{thm}\label{SUN} (\cite{S} or \cite[Propositions 1.3, 1.4]{PHZ})
(i) Suppose that $f$ is a degree $1$ self-map on $L(p,q)$. Then $f$ is homotopic to an orientation-preserving homeomorphism if and only if
$$f_{*}(l)=\pm l, \  if \  p \  does \ not \  divide \ q^2-1,  \ and  \  f_{*}(l)= \pm l, \ \pm ql,\ if \  p\  divides \ q^2-1,$$
for all $l \in \pi_1(L(p,q))$, where $f_{*}$  is the endomorphism of $\pi_1(L(p,q))$ induced by $f$.  \\
(ii) $L(p,q)$  admits an orientation-reversing homeomorphism if and only if
$$q^2\equiv -1\ \  (mod p).$$\\
In this case, a degree  $-1$  self-map $f$ on $L(p,q)$ is homotopic to an orientation-reversing
homeomorphism if and only if
$$f_{*}(l) = \pm ql, \ for \ all \ l \in \pi_1(L(p,q)),$$
where $f_{*}$ is the endomorphism of $\pi_1(L(p,q))$ induced by $f$.\\
\end{thm}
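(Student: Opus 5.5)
The plan is to reduce everything to two classical inputs. The first is the homotopy classification of self-maps of lens spaces (Olum): a self-map $f$ of $L(p,q)$ is determined up to homotopy by its degree $d$ and by the integer $k$ (mod $p$) with $f_*(l)=kl$. These invariants satisfy the constraint $d\equiv k^2 \pmod p$, which comes from the linking form, or equivalently from the cup-product structure with $\mathbb Z_p$ coefficients. The second is the isotopy classification of self-homeomorphisms of $L(p,q)$ (Bonahon, and Hodgson--Rubinstein). By Bonahon--Otal uniqueness of genus one Heegaard splittings, every homeomorphism is isotopic to one that leaves the Heegaard torus invariant. Such a map either preserves each of the two solid tori $V_1,V_2$ or interchanges them. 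With these two facts, both (i) and (ii) reduce to listing which pairs $(d,k)$ are realized by homeomorphisms.

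\textbf{Step 1 (homeomorphisms preserving the solid tori).} A homeomorphism preserving $V_1$ and $V_2$ acts on the meridian--longitude pairs of the boundary torus by a matrix that must carry the meridian of $V_1$ to $\pm$ itself and that of $V_2$ to $\pm$ itself. A direct computation in $H_1(T^2)$ shows it is $\pm$ the identity. Hence it induces $\pm 1$ on $\pi_1=\mathbb Z_p$ and is orientation preserving (for $p>2$). It is realized by $(z_1,z_2)\mapsto(\bar z_1,\bar z_2)$ on $S^3\subset\mathbb C^2$, which descends to $L(p,q)$.

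\textbf{Step 2 (homeomorphisms interchanging the solid tori).} Interchanging $V_1$ and $V_2$ forces the gluing matrix to be conjugate to its inverse, up to the allowed sign changes. This happens exactly when $q^2\equiv\pm1\pmod p$. In that case the induced map on $\pi_1$ is $\pm q$. The homeomorphism is orientation preserving when $q^2\equiv 1$ and orientation reversing when $q^2\equiv -1$; consistency with $d\equiv k^2$ gives this directly. Explicit models on $S^3$ are $(z_1,z_2)\mapsto(z_2,z_1)$ in the case $q^2\equiv 1$, and $(z_1,z_2)\mapsto(z_2,\bar z_1)$ in the case $q^2\equiv -1$. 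In each case one checks equivariance with respect to the $\mathbb Z_p$-action $(z_1,z_2)\mapsto(\zeta z_1,\zeta^q z_2)$, after conjugating the generator, and reads off $k=\pm q$.

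\textbf{Step 3 (assembling).} For (i), let $f$ have degree $1$ and $f_*=k$. If $f$ is homotopic to a homeomorphism, Steps 1--2 show that $k\in\{\pm1\}$, or $k\in\{\pm1,\pm q\}$ when $p\mid q^2-1$. Conversely, if $k$ is one of these values, the corresponding orientation-preserving homeomorphism $h$ has the same $(d,k)=(1,k)$, so $f\simeq h$ by Olum. For (ii), an orientation-reversing homeomorphism can only come from Step 2 with $q^2\equiv-1$, and such homeomorphisms exist exactly then. A degree $-1$ map $f$ is homotopic to one iff $f_*=\pm q$, again by Olum together with the explicit model.

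The main obstacle is Step 2, together with the reduction to torus-preserving maps. Justifying that every homeomorphism is isotopic to one preserving the Heegaard torus requires the deep Bonahon--Otal theorem, which I would simply cite. I would also have to carry out the sign bookkeeping in $SL_2(\mathbb Z)$ or $GL_2(\mathbb Z)$ carefully, to show that interchange is possible only when $q^2\equiv\pm1$ and yields exactly $k=\pm q$.
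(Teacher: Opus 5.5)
The paper gives no proof of this statement. It is imported from \cite{S} and \cite[Propositions 1.3, 1.4]{PHZ}, so there is no internal argument to compare against.

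Your outline is a correct proof, and the sign bookkeeping you postponed does close up. In the basis $(\mu_1,\lambda_1)$ of $H_1(\partial V_1)$, write $\mu_2=(q,p)$.
\begin{itemize}
\item A homeomorphism preserving both solid tori sends $\mu_i\mapsto\pm\mu_i$. Mixed signs force $p\mid 2q$, so for $p>2$ the matrix is $\pm I$.
\item A homeomorphism interchanging them has columns $\epsilon(q,p)$ and $(b,-\epsilon q)$, where $\epsilon q^2+bp=\delta$ and $\epsilon,\delta=\pm1$. Hence $q^2\equiv\epsilon\delta \pmod p$, the determinant is $-\epsilon\delta$, and $k=-\epsilon q$.
\item Because $\partial V_1$ and $\partial V_2$ orient $T$ oppositely, an interchanging map preserves the orientation of $L(p,q)$ exactly when that determinant is $-1$, i.e.\ when $q^2\equiv 1$.
\end{itemize}
Your models $(\bar z_1,\bar z_2)$, $(z_2,z_1)$ and $(z_2,\bar z_1)$ conjugate the generator $g$ to $g^{-1}$, $g^{q}$ and $g^{q}$ respectively, with the expected orientation behaviour. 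Together with Olum's theorem this yields (i) and (ii).

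Your Steps 1--2 re-derive Bonahon's computation of $\pi_0\,{\rm Homeo}(L(p,q))$, which you could cite outright.

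The one loose end is $p\le 2$. On $\mathbb{RP}^3$ the reflection $(z_1,z_2)\mapsto(\bar z_1,z_2)$ is an orientation-reversing homeomorphism preserving both solid tori. So your claim in Step 3 that orientation reversal can only come from Step 2 fails there. The theorem still holds trivially in that case, since $k=1$ and $q^2\equiv -1 \pmod 2$.

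Your Heegaard-torus route gives the full isotopy classification, but it is intrinsically three-dimensional and relies on Bonahon's deep rigidity theorem. The other standard way to get necessity is Reidemeister torsion. It also covers the higher-dimensional versions quoted as Theorems \ref{PHZ1} and \ref{PHZ-1}, where no analogue of Heegaard-torus rigidity exists. The argument runs as follows:
\begin{itemize}
\item For a homeomorphism inducing $k$, topological invariance of torsion forces $(t-1)(t^{r}-1)\doteq(t^{k}-1)(t^{kr}-1)$, where $rq\equiv 1$.
\item Franz's independence lemma then gives $\{1,r\}\equiv\{\pm k,\pm kr\}$ mod $p$. That is, either $k\equiv\pm1$, or $q^2\equiv\pm1$ and $k\equiv\pm q$.
\item Orientation is read off from $d\equiv k^2$.
\end{itemize}
This argument is dimension-independent, which is why the higher-dimensional conditions take the form $q_i\equiv\pm qq_{\sigma(i)}$. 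Its cost is that it says nothing about isotopy.

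Two ingredients are shared by both routes: the sufficiency direction via explicit linear models, and the step from matching $(d,k)$ to a homotopy. The latter is pure obstruction theory, since the only nonzero difference class lives in top-dimensional cohomology and is measured by the degree.
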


\subsection{Examples - in dimension $4$}

We now describe how to use Lemma \ref{lens} to construct an $N_0$-space using the procedure above for a pair $(L(p,q), \theta)$ where $L(p,q)$ is a lens space and $\theta$ is an automorphism of  the fundamental group  $\mathbb Z_p$  of the  lens spaces $L(p,q)$. We also write $(L(p,q), k)$ when the automorphism $\theta$ is multiplication by $k$.

\begin{thm}\label{lens-space-3D}
Let $p$ be a positive integer of the form either  $p_1^{e_1}$ or $2p_1^{e_1}$ where $p_1$ is an odd prime of the form $4k+1$. (a) There exists a positive integer $q$ such that $q^2\equiv -1$  \mbox{mod}  $p$. (b) The mapping torus $T_hX$ is an $N_0$-space where $X$ and $h$ are given by the pair $(L(p,q),q)$.
\end{thm}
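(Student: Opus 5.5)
For part (a) I will use the structure of the unit group $\mathbb Z_p^{*}$. When $p=p_1^{e_1}$ with $p_1$ an odd prime, $\mathbb Z_p^{*}$ is cyclic of order $\phi(p)=p_1^{e_1-1}(p_1-1)$. When $p=2p_1^{e_1}$, the Chinese remainder theorem gives $\mathbb Z_p^{*}\cong \mathbb Z_2^{*}\times \mathbb Z_{p_1^{e_1}}^{*}\cong \mathbb Z_{p_1^{e_1}}^{*}$, which is again cyclic of order $\phi(p)$. Since $p_1\equiv 1 \bmod 4$, we have $4\mid \phi(p)$, so a cyclic group of this order contains an element $q$ of order exactly $4$. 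Then $q^2$ has order $2$. The only element of order $2$ in a cyclic group is $-1$, so $q^2\equiv -1 \bmod p$; we take $q$ to be a positive representative. (Alternatively, one can pick a solution modulo $p_1$ using Euler's criterion, lift it to $p_1^{e_1}$ by Hensel's lemma since $2q\not\equiv 0$, and for $2p_1^{e_1}$ choose the odd representative. Either route is routine.) Note also that $\gcd(q,p)=1$, so $L(p,q)$ is defined, and we may take $p>2$, so that $-1\not\equiv 1$.

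For part (b), put $X=L(p,q)$. By Theorem \ref{SUN}(ii), since $q^2\equiv -1 \bmod p$, $L(p,q)$ admits an orientation-reversing homeomorphism $h$ with $h_{\#}(l)=ql$ on $\pi_1(X)=\mathbb Z_p$. To produce it, I first need a degree $-1$ self-map inducing multiplication by $q$; I expect this follows from Olum's classification of maps between lens spaces (the same input used in Example \ref{suspension} via \cite{S}). The theorem then upgrades this map to a homeomorphism. Form the mapping torus $M=T_hX$. By Lemma \ref{lens}, $\pi_1(M)=\mathbb Z_p\rtimes_\theta\mathbb Z$ has property $R_\infty$ and every automorphism induces the identity on $\mathbb Z$. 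To double-check this via Proposition \ref{type2}: ${\rm Aut}(\mathbb Z_p)=\mathbb Z_p^*$ is abelian and ${\rm Inn}$ is trivial, so by Remark \ref{abelian-aut} it suffices that $\theta(1)=q$ has order $4$. This holds because $q^2=-1\neq 1$, so $q$ is neither trivial nor of order $2$.

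Finally, I take an arbitrary homotopy equivalence $f'\colon M\to M$. Proposition \ref{mappingtorus} deforms it to a fibre-preserving map $f$ with $\overline f_\#=1_{\mathbb Z}$, and Theorem \ref{general-N_0} yields $N(f')=N(f)=0$. Hence $M$ is an $N_0$-space. The main obstacle is realizing the automorphism $l\mapsto ql$ by a homeomorphism. Everything else is formal from results already stated, so the proof hinges on correctly citing Theorem \ref{SUN}(ii) together with the existence of a degree $-1$ map inducing multiplication by $q$.
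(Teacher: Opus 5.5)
Your proposal is correct and follows the paper's proof. Part (b) is exactly the paper's argument (Theorem~\ref{SUN}(ii) to get $h$, then Lemma~\ref{lens} and Theorem~\ref{general-N_0}). For part (a), your cyclic-unit-group argument is a self-contained substitute for the paper's appeal to a number-theory reference for prime powers, together with its remark that ${\rm Aut}(\mathbb Z_{2p_1^{e_1}})\cong{\rm Aut}(\mathbb Z_{p_1^{e_1}})$.

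The step you flag as the main obstacle does not need Olum's classification. The first clause of Theorem~\ref{SUN}(ii) already gives an orientation-reversing homeomorphism $g$; it has degree $-1$, so by the second clause it induces $\pm q$. If it induces $-q$, then $g^{-1}$ induces $(-q)^{-1}\equiv q$, because $q\cdot(-q)=-q^2\equiv 1 \pmod p$.
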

\begin{proof}
(a)  When $p=p_1^{e_1}$  for $p_1$ a prime of the form  $4k+1$, the existence of $q$ follows from  \cite[page 113 lines 4-8]{GG}.  
In  the case of $p=2p_1^{e_1}$, since  ${\rm Aut}(\mathbb Z_{2p_1^{e_1}})={\rm Aut}(\mathbb Z_{p_1^{e_1}})$, the proof of   the result  is similar. 

(b) Using the integer $q$ obtained from (a), consider the lens space $X=L(p,q)$. By Theorem \ref{SUN}(ii) (or \cite[Proposition 1.2 item ii)]{S}), there is a homeomorphism $h:X\to X$ of degree $-1$ such that $h_{\#}=\theta$ where $\theta$ is multiplication by $q$.  Thus, Lemma \ref{lens} applies and the result follows from Theorem \ref{general-N_0}.
\end{proof}

\begin{rem} We do not consider the lens spaces  given  by   \cite[Proposition 1.2 item i)]{S} since for such lens spaces there are no maps  of  degree  $-1$. For the other spherical $3$-manifolds (except $S^3$, \cite[p. 869]{S}), there are no degree $-1$ maps. By Remark \ref{abelian-aut}, we do not consider degree $1$ maps on lens spaces. Similarly, by Proposition \ref{type2}, we also do not consider the spherical $3$-manifolds with fundamental group isomorphic to $T^*_{24}, O^*_{48}$ or $I^*_{120}$ for they have 
${\rm Out} \cong \mathbb Z_2$.
\end{rem}

Theorem \ref{lens-space-3D} provides an infinite family (each for every odd prime of the form $4k+1$ and there are an infinite number of such primes (see e.g. \cite[Theorem 6.1, p.105]{T})) of four dimensional compact connected manifolds for which every self homotopy equivalence is homotopic to be fixed point free. The simplest example for which this theorem applies is  $(L(5,2),2)$.

\subsection{Examples - higher dimension}

Now we consider examples of lens spaces in higher dimensions using \cite{PHZ} which generalizes the results of \cite{S} in the case of lens spaces.

Recall from \cite{PHZ} the following result which generalizes Theorem \ref{SUN}(i).

\begin{thm}\label{PHZ1}
 Assume that $f$ is a degree 1 self-map on $L(p;q_1,q_2,...,q_n)$. Then, $f$ is homotopic to an orientation-preserving homeomorphism if and only if
$$f_{*}(l) = ql,\ \  for \ \   all \ \  l\in  \pi_1(L(p;q_1,q_2,...,q_n)),$$
where $f_{*}$ is the endomorphism on $\pi_1(L(p;q_1,q_2,...,q_n))$ induced by $f$, $q \in \mathbb Z_p$ is \\
\noindent coprime to $p$ with a permutation $\sigma$, such that 
$$q^n\equiv 1 \ \  (\mbox{mod} p), \ \ \  q_i \equiv\pm qq_{\sigma(i)} \ \  (\mbox{mod} p)$$
for all $i = 1,2,...,n.$
\end{thm}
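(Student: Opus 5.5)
The plan is to follow the strategy of \cite{S} and \cite{PHZ}, treating the two implications separately. The realization direction is constructive. The obstruction direction rests on the classification of lens spaces up to homotopy, simple homotopy and homeomorphism via Reidemeister--Franz torsion.

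\emph{Sufficiency.} Suppose $q$ is coprime to $p$ with $q^n\equiv 1$ (mod $p$), and there is a permutation $\sigma$ with $q_i\equiv \pm q q_{\sigma(i)}$ for all $i$. Write $L=L(p;q_1,\dots,q_n)=S^{2n-1}/\mathbb Z_p$, where the generator $g$ acts by $(z_1,\dots,z_n)\mapsto(\zeta^{q_1}z_1,\dots,\zeta^{q_n}z_n)$ and $\zeta=e^{2\pi i/p}$. We build a homeomorphism explicitly. Permute the coordinates according to $\sigma$, and compose with complex conjugation in exactly those coordinates where the sign is $-1$. This gives an isometry $A$ of $S^{2n-1}$ satisfying $A g A^{-1}=g^{r}$ for a unit $r$ determined by $q$, namely $r=q$ or $q^{-1}$ depending on conventions. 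Hence $A$ descends to a homeomorphism $\bar A$ of $L$ inducing multiplication by $r$ on $\pi_1$.

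The conjugations may reverse orientation of $S^{2n-1}$. If the number of $-1$ signs is odd, we correct this by using $-A$, or by conjugating one further coordinate and adjusting $q$ to $-q$. Since $q_i\equiv\pm q q_{\sigma(i)}$ is symmetric under $q\mapsto -q$, the hypothesis is preserved, so we can arrange an orientation-preserving homeomorphism inducing $l\mapsto ql$.

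Next recall that self-maps of a lens space are classified up to homotopy by the induced map on $\pi_1$ together with the degree, via the Olum--de Rham classification. Degree $1$ forces the relation $\deg\equiv q^n$ (mod $p$), which explains the condition $q^n\equiv1$. Therefore $f$, having the same $\pi_1$-action and degree as $\bar A$, is homotopic to $\bar A$.

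\emph{Necessity.} Suppose $f$ is homotopic to an orientation-preserving homeomorphism $h$ with $h_*(l)=ql$. By the degree formula for lens spaces, $\deg h\equiv q^n\cdot(\text{unit factor})$ (mod $p$), and $\deg h=1$ gives $q^n\equiv 1$ (mod $p$). For the permutation condition, use that a homeomorphism is a simple homotopy equivalence by topological invariance of Whitehead torsion (Chapman). So $h$ preserves Reidemeister--Franz torsion, up to the indeterminacy $\pm\zeta^k$, after changing the generator by $q$. The torsion of $L$ is $\prod_i(\zeta^{a_i}-1)$, where $a_i$ are the inverses of the $q_i$. Pulling back through $l\mapsto ql$ replaces $\zeta$ by $\zeta^q$, giving the equality
\[
\prod_{i=1}^n(t^{a_i}-1)=\pm t^{k}\prod_{i=1}^n(t^{q^{-1} a_i}-1)
\]
in $\mathbb Z[\zeta]$ for all primitive $p$-th roots of unity $t$.

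Franz's independence lemma then yields $\{\pm a_i\}=\{\pm q^{-1}a_i\}$ as multisets. Inverting this gives $q_i\equiv\pm q q_{\sigma(i)}$. This is de Rham's argument, exactly as in the classification of lens spaces up to homeomorphism.

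\emph{Main obstacle.} The hard step is the torsion comparison: normalizing correctly, tracking which generator is used and the convention of $q$ versus $q^{-1}$, and invoking the Franz independence lemma. This is needed to pass from equality of torsions to the multiset statement. I would follow \cite{PHZ} closely here and verify the convention against the $n=2$ case, Theorem \ref{SUN}(i), as a check.
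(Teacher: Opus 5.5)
The paper gives no proof of this statement; it is quoted from \cite{PHZ}, generalizing \cite{S}. Your outline is the standard route. For sufficiency you use explicit isometries together with Olum's classification of maps between equidimensional lens spaces by their $\pi_1$-map and degree. For necessity you use $\deg h\equiv q^n \pmod p$, Chapman's theorem, Reidemeister--Franz torsion and Franz's independence lemma.

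\textbf{The orientation-correction paragraph is wrong as written.}
\begin{itemize}
\item Composing with $-I$ changes nothing. The antipodal map of $S^{2n-1}$ has degree $(-1)^{2n}=1$ and is central, so $-A$ has the same degree and the same effect on $\pi_1$ as $A$.
\item Conjugating one extra coordinate does not normalize the $\mathbb Z_p$-action when $p\ge 3$, so the resulting map does not descend to $L$.
\item Replacing $q$ by $-q$ gives a homeomorphism inducing $l\mapsto -ql$, not $l\mapsto ql$. For odd $n$ it even destroys $q^n\equiv 1$.
\end{itemize}
What saves you is that the bad case never occurs. Write $q_i\equiv \epsilon_i q q_{\sigma(i)}$ with $\epsilon_i=\pm 1$. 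Multiplying over $i$ and cancelling the unit $\prod_i q_i$ gives $\prod_i\epsilon_i\equiv q^{-n}\equiv 1 \pmod p$. So for $p\ge 3$ an even number of coordinates is conjugated, and $A$ is automatically orientation preserving; for $p\le 2$, take all signs $+$. Equivalently, $\deg\bar A=\deg A=\prod_i\epsilon_i$ must be congruent to $q^n\equiv 1$. With that replacement, $\bar A$ and $f$ have the same degree and the same $\pi_1$-map, and Olum's classification gives $f\simeq\bar A$.

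\textbf{Two smaller points in the necessity direction.}
\begin{itemize}
\item Franz's lemma needs the torsion identity for every $p$-th root of unity $t\neq 1$, not only the primitive ones; this matters when $p$ is composite. You do have it: each nontrivial character of $\mathbb Z_p$ gives an acyclic twisted complex whose torsion is preserved by the simple homotopy equivalence $h$. Taking absolute values then removes the $\pm t^k$ ambiguity before you apply the lemma.
\item The convention issue you flag as the main obstacle is harmless. The condition ``$q_i\equiv\pm qq_{\sigma(i)}$ for some $\sigma$'' is just the multiset equality $\{\pm q_i\}=\{\pm qq_i\}$ in $\mathbb Z_p^{*}/\{\pm 1\}$. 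This equality is unchanged under $q\mapsto q^{-1}$ and under inverting every entry. So whichever normalization of the torsion you use, the conclusion is the same.
\end{itemize}
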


Unlike in dimension $3$ where we did not consider degree $1$ maps because of Remark \ref{abelian-aut}, we can construct many higher dimensional lens spaces for which Lemma \ref{lens} applies.  

For  simplicity we will restrict to the  case  where $p$ is  prime.        Given   $n\ge 3$, choose a prime  $p\ge 5$ such  that ${\rm Aut}(\mathbb Z_p)$ admits a non-trivial  automorphism 
$\theta$  (given by multiplication by some  $q$, i.e., $\theta(x)=qx$.) such that $\theta^2\ne 1$ and $\theta^n=1$. Then the orbit of   $q_1=1$ under $\theta$ gives rise to $q_i, i=1,..., n/d$ and a permutation $\sigma$ such that $q_i \equiv\pm qq_{\sigma(i)} \ \  (\mbox{mod} p)$ where $d$ is  the order of $q$.

For instance, take  $p=5$ and $q=2$.  Observe that  $q^4\equiv 1$  \mbox{mod} $5$. 
Based on  this,  define  $q_1=3$, $q_2=4$,  $q_3=2$, $q_4=1$ with the permutation $\sigma=(1342)$ with $q_i=i$.  The  map  which  realizes this has  degree $1$. By Theorem \ref{PHZ1}, this map  can  be realized 
by a homeomorphism $h$ with $h_{\#}$ be given by multiplication by $q=2$. Note that Lemma \ref{lens} is still valid if we replace the condition on $q^2$ by $q^2\not\equiv 1$ \mbox{mod} $p$.  Now, we can apply Theorem \ref{general-N_0} to obtain an $N_0$-space $T_hX$. Moreover, since $\dim T_hX\ge 5$, it follows that every self homeomorphism of $T_hX$ is isotopic to be fixed point free.

Similarly, we can take $n=3$,  $p=7$ and $q=4$.  Observe that  $q^3\equiv 1$  \mbox{mod} $7$. 
Define  $q_1=2$, $q_2=4$, $q_3=1$ with the permutation $\sigma=(132)$.  The  map  which  realizes this has  degree $1$. By Theorem \ref{PHZ1}, this map  can  be realized 
by a homeomorphism $h$ with $h_{\#}$ be given by multiplication by $q=4$. Now, applying Theorem \ref{general-N_0} yields an $N_0$-space $T_hX$ and since $\dim T_hX=6\ge 5$, it follows that every self homeomorphism of $T_hX$ is isotopic to be fixed point free. Using the same argument, we can take $n=6$,  $p=7$ and $q=4$.  Since $q^3\equiv 1$  \mbox{mod} $7$, we also have $q^6\equiv 1$  \mbox{mod} $7$. 
Define  $q_1=2$, $q_2=4$, $q_3=1$,  $q_4=6$, $q_5=5$, $q_6=3$ with the permutation $\sigma=(132)(465)$.  Then we can obtain  an $N_0$-space $T_hX$ of dimension $(2(6)-1)+1=12$ such that every self homeomorphism of $T_hX$ is isotopic to be fixed point free.\\

Note that for any prime $p\ge 5$ and $q=2$, we can always find a permutation $\sigma$ (not necessarily a $(p-1)$-cycle) on $\{1,...,p-1\}$ and $\{q_i\}=\{1,...,p-1\}$ so that the hypotheses in Theorem \ref{PHZ1} are satisfied and that $q^2\not\equiv 1$ \mbox{mod} $p$.

In fact, we have the following.      

\begin{thm}\label{higher-lens1}
Let $p\ge 5$ be a prime, $X=L(p; 1,..., p-1)$ be a $(2(p-1)-1)$-dimensional lens space, and $\theta : \mathbb Z_p \to \mathbb Z_p$ be the automorphism given by multiplication by $q=2$. Then $M=T_hX$ is an $N_0$-space where $h:X\to X$ is a degree $1$ homeomorphism with $h_{\#}=\theta$. Moreover, every self homeomorphism of $M$ is isotopic to be fixed point free.\end{thm}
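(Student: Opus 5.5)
The proof has three steps: realize $\theta$ by a homeomorphism using Theorem \ref{PHZ1}, show that $\pi_1(M)$ has $R_{\infty}$ with every automorphism inducing the identity on $\mathbb Z$, and then apply Theorem \ref{general-N_0} and pass to isotopy by dimension.

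\emph{Step 1: realizing $\theta$.} Take $q=2$ and $q_i=i$ for $i=1,\dots,p-1$, so $X=L(p;1,\dots,p-1)$. Since $2$ is a unit mod $p$, the map $x\mapsto 2^{-1}x$ is a permutation of $\{1,\dots,p-1\}=\mathbb Z_p^*$. Define $\sigma$ by $q_{\sigma(i)}\equiv 2^{-1}q_i \pmod p$. Then $q_i\equiv q\,q_{\sigma(i)}$ for all $i$, which is the required congruence with the $+$ sign.

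The condition $q^n\equiv 1$ with $n=p-1$ is exactly Fermat's little theorem, $2^{p-1}\equiv 1 \pmod p$. Next take a degree $1$ self-map $f$ of $X$ with $f_*=\theta$. Such a map exists because, for lens spaces, an endomorphism of $\pi_1$ given by multiplication by a unit $q$ is induced by a map whose degree is governed by $q^n$ times a product of the $q_i$'s modulo $p$. Here the $q_i$ are merely permuted, so degree $1$ is attainable; this is the standard fact already used in the paper's $p=5$ example. Theorem \ref{PHZ1} then gives a degree $1$ (orientation-preserving) homeomorphism $h$ with $h_{\#}=\theta$.

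\emph{Step 2: $R_\infty$ for $\pi_1(M)$.} We have $\pi_1(M)\cong\mathbb Z_p\rtimes_\theta\mathbb Z$. Since ${\rm Aut}(\mathbb Z_p)$ is abelian, Proposition \ref{type2} together with Remark \ref{abelian-aut} reduces $R_\infty$ to checking $\theta\neq\theta^{-1}$, i.e.\ $4\not\equiv 1 \pmod p$. This holds because $p\ge 5$.

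Hence the analogue of Lemma \ref{lens} holds, with $q^2\not\equiv 1$ replacing $q^2\equiv -1$, as the paper observes. Concretely, diagram \eqref{exact} forces $\overline\varphi=\pm1$. If $\overline\varphi=-1$, then $\varphi$ conjugates $\theta$ to $\theta^{-1}$ in ${\rm Out}(\mathbb Z_p)={\rm Aut}(\mathbb Z_p)$, which is impossible. So $\overline\varphi=1_{\mathbb Z}$ for every automorphism.

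\emph{Step 3: conclusion.} Proposition \ref{mappingtorus} goes through verbatim with this $\theta$: every homotopy equivalence of $M$ is homotopic to a fibre-preserving map covering a map of $S^1$ that is homotopic to the identity. Theorem \ref{general-N_0} then gives $N(f)=0$, so $M$ is an $N_0$-space.

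For the last claim, $\dim M=2(p-1)\ge 8\ge 5$, and $M$ is a compact smooth manifold. For a homeomorphism $g$ of $M$, $N(g)=0$ gives a homotopy to a fixed point free map. The known isotopy upgrade in dimension $\ge5$, cited in the introduction via \cite{GW2}, makes this an isotopy.

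\emph{Main obstacle.} The step I expect to need the most care is Step 1: justifying that a degree $1$ self-map inducing multiplication by $2$ exists on $L(p;1,\dots,p-1)$, since Theorem \ref{PHZ1} presupposes one. I would verify it with the standard degree formula for lens-space maps, $\deg f\equiv q^n\prod q_i/\prod q_{\sigma(i)}\equiv 1 \pmod p$, combined with the fact that degrees of maps inducing a given $\pi_1$-endomorphism are determined modulo $p$ and every residue is realized.
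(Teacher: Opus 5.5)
Your proposal is correct and is essentially the paper's argument: you realize multiplication by $2$ on $L(p;1,\dots,p-1)$ by a degree-$1$ homeomorphism via Theorem \ref{PHZ1} ($2^{p-1}\equiv 1$ by Fermat, and $\sigma(i)\equiv 2^{-1}i$ works), use that ${\rm Aut}(\mathbb Z_p)$ is abelian with $4\not\equiv 1 \pmod p$ to force $\overline\varphi=1_{\mathbb Z}$, and conclude with Theorem \ref{general-N_0} and the dimension $\ge 5$ isotopy upgrade. Your justification that a degree-$1$ self-map with $f_*=\theta$ exists, which the paper only asserts, is sound once stated cleanly: such a self-map has degree $\equiv 2^{n}\pmod p$ with $n=p-1$, the product of the $q_i$ cancels for self-maps so the permutation plays no role, and the degree can be changed by any multiple of $p$ on the top cell (so every integer in that class occurs, not every residue); alternatively, the unitary coordinate permutation $(z_i)\mapsto(z_{\sigma(i)})$ with $\sigma(i)\equiv 2i \pmod p$ descends directly to an orientation-preserving isometry $h$ with $h_{\#}=\theta$, which also justifies your claim that $M$ is smooth.
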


The next result from \cite{PHZ} generalizes Theorem \ref{SUN}(ii).

\begin{thm}\label{PHZ-1}
A lens space $L(p;q_1,q_2,...,q_n)$ admits an orientation-reversing homeomorphism if and only if there exist 
some $q \in \mathbb Z_p$ coprime to $p$ with a permutation $\sigma$, such that
$$q^n=-1 \ \ (\mbox{mod} p),  \ \ \  q_i \equiv\pm qq_{\sigma(i)} \ \  (\mbox{mod} p),$$
for all $i = 1,2,...,n$. In this case, a degree $-1$ self-map $f$ on $L(p;q_1,q_2,...,q_n)$
homotopic to an orientation-reversing homeomorphism if and only if 
$$f_{*}(l)=ql,   \ \ for \ \ all \ \  l\in \pi_1(L(p;q_1,q_2,...,q_n)),$$
where the $f_{*}$ is the endomorphism of $\pi_1(L(p;q_1,q_2,...,q_n))$ induced by $f$,  $q\in  \mathbb Z_p$ is coprime to $p$ with a permutation
 $\sigma$,  such that 
 $$q^n\equiv -1 \ \  (\mbox{mod} p), \ \ \ \ \ \ \ \ \  q_i\equiv\pm qq_{\sigma(i)} \ \  (\mbox{mod} p),$$
  for all $i = 1,2,...,n.$.
\end{thm}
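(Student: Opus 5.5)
The plan is to prove both directions through the two classical invariants of a lens space $L=L(p;q_1,\dots,q_n)=S^{2n-1}/\mathbb Z_p$: the cohomology ring with $\mathbb Z_p$ coefficients, and the Reidemeister--Franz torsion. The homotopy classification of self-maps (Olum) then gives the second statement. Throughout, $g$ denotes the generator of $\mathbb Z_p$ acting by $g\cdot(z_1,\dots,z_n)=(\zeta^{q_1}z_1,\dots,\zeta^{q_n}z_n)$ with $\zeta=e^{2\pi i/p}$.

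\emph{Step 1 (degree constraint).} Let $f:L\to L$ induce $l\mapsto ql$ on $\pi_1$. Let $\alpha\in H^1(L;\mathbb Z_p)$ and $\beta=\delta\alpha\in H^2(L;\mathbb Z_p)$ be the standard generators, so that $\alpha\beta^{n-1}$ generates $H^{2n-1}(L;\mathbb Z_p)$. Then $f^*\alpha=q\alpha$ and, by naturality of the Bockstein, $f^*\beta=q\beta$. Hence $f^*$ multiplies the top class by $q^n$, and
$$\deg f\equiv q^n \pmod p .$$
A map of degree $-1$ therefore forces $q^n\equiv-1 \pmod p$. This gives the congruence condition in both statements.

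\emph{Step 2 (sufficiency: construct the homeomorphism).} Suppose $q^n\equiv-1$ and $q_i\equiv\epsilon_i\, q\, q_{\sigma(i)}$ with $\epsilon_i=\pm1$. Define a unitary-or-antiunitary isometry $A$ of $S^{2n-1}$ by sending the coordinate $z_{\sigma(i)}$ to position $i$, taking its complex conjugate when $\epsilon_i=-1$. One checks directly that $A\circ g^{q}=g\circ A$, possibly after replacing $q$ by $q^{-1}$, which only relabels $\sigma$. So $A$ is equivariant with respect to the automorphism $g\mapsto g^{q}$ (or its inverse), and it descends to a homeomorphism $h$ of $L$ inducing multiplication by $q$ on $\pi_1$. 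By Step 1, $\deg h\equiv q^n\equiv-1 \pmod p$. Since $\deg h=\pm1$ and $p\ge3$, we get $\deg h=-1$, so $h$ is orientation reversing.

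\emph{Step 3 (necessity: torsion).} Conversely, let $h$ be an orientation-reversing homeomorphism, and let $h_*=q$. Step 1 gives $q^n\equiv-1$. For the permutation condition, use the Reidemeister--Franz torsion. For each nontrivial character $\chi$ of $\mathbb Z_p$, the torsion of $L$ is, up to units $\pm\zeta^k$, the product $\prod_i(\chi(g)^{r_i}-1)$, where $r_i$ is the inverse of $q_i$ modulo $p$. By topological invariance of torsion (Chapman), a homeomorphism inducing $g\mapsto g^q$ identifies the torsion for $\chi$ with the torsion for $\chi\circ(\cdot q)$, again up to units. Franz's independence lemma then yields an equality of multisets, up to sign, between $\{r_i\}$ and $\{q\,r_i\}$, or equivalently between $\{q_i\}$ and $\{\pm q^{-1}q_i\}$. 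That is exactly the existence of $\sigma$ with $q_i\equiv\pm q\,q_{\sigma(i)}$, after adjusting the convention $q$ versus $q^{-1}$. I expect this step to be the main obstacle. It needs the full strength of Franz's lemma and careful bookkeeping of the units and of $q$ versus $q^{-1}$.

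\emph{Step 4 (homotopy part).} By Olum's classification, self-maps of $L$ are determined up to homotopy by the induced map on $\pi_1$ together with the degree, subject to the congruence of Step 1. Now let $f$ have degree $-1$. If $f_*=q$ with $q$ as in the hypotheses, then $f$ and the homeomorphism $h$ of Step 2 have the same $\pi_1$-map and the same degree, so $f\simeq h$. Conversely, if $f$ is homotopic to an orientation-reversing homeomorphism, Step 3 applied to that homeomorphism shows that $f_*$ is multiplication by some $q$ satisfying both conditions.
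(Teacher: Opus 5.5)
The paper gives no proof of this statement. It is quoted from \cite{PHZ} as the higher-dimensional analogue of Theorem~\ref{SUN}(ii), so there is no internal argument to compare your proposal with.

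Your outline is the standard route for results of this kind, and it is correct. The pieces fit together as follows:
\begin{itemize}
\item Olum's congruence $\deg f\equiv q^n \pmod p$, together with his classification of self-maps up to homotopy by the pair ($\pi_1$-homomorphism, degree), gives the homotopy statement.
\item Explicit (anti)unitary isometries of $S^{2n-1}$ give existence of the homeomorphism.
\item Franz--de~Rham torsion, made topologically invariant by Chapman, gives necessity of the permutation condition.
\end{itemize}

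The two bookkeeping issues you flag are indeed harmless:
\begin{itemize}
\item The $q$ versus $q^{-1}$ ambiguity does not matter. If $q_i\equiv\epsilon_i q q_{\sigma(i)}$ for all $i$, then $(q^{-1},\sigma^{-1})$ is admissible with signs $\epsilon\circ\sigma^{-1}$, and $q^{-n}\equiv -1$ as well.
\item The unit ambiguity $\pm\zeta^k$ in Step~3 disappears once you compare $|\Delta_\chi|^2=\Delta_\chi\overline{\Delta_\chi}$. This step also produces the symmetric exponent vector with zero sum that Franz's lemma requires.
\end{itemize}

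One small repair concerns Step~2, where you use $p\ge 3$ to conclude $\deg h=-1$. The statement does not assume $p\ge 3$, and for $p\le 2$ your construction could produce an orientation-preserving $h$. You can instead read off the degree directly. Since $\pi\circ A=h\circ\pi$ with $\pi$ of degree $p$, you get $\deg h=\deg A=\det_{\mathbb R}A=\prod_i\epsilon_i$: permuting complex coordinates preserves the orientation of $\mathbb R^{2n}$, and each conjugation reverses it. Multiplying the relations $q_i\equiv\epsilon_i q q_{\sigma(i)}$ over all $i$ gives $q^n\equiv\prod_i\epsilon_i \pmod p$. For $p\ge 3$ this recovers your conclusion. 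For $p\le 2$ the signs $\epsilon_i$ can be chosen freely, so you can take $\prod_i\epsilon_i=-1$. This also makes Step~4 valid for all $p$.
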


Based on this result, one can construct an $N_0$-space as follows.  Take  $p=11$, $q=2$ and  $n=5$.  Observe 
that  $2^5\equiv -1$  mod(11).  Now  we  define    $q_1=5$, $q_2=8$,  $q_3=4$, $q_4=2$, $q_5=1$.  Observe  that, modulo $11$, 
$$q_1\equiv 2\cdot q_2; \quad q_2\equiv 2\cdot q_3;\quad q_3\equiv 2\cdot q_4;\quad q_4\equiv 2\cdot q_5;\quad q_5\equiv -2\cdot q_1,$$
so the permutation is $\sigma=(12345)$ . 
It follows from Theorem \ref{PHZ-1} that we have a homeomorphism $h$ of  $X=L(11;5,8,4,2,1)$ of  degree  $-1$. Thus, the mapping torus $T_hX$ is an $N_0$-space.
   
It is straightforward to extend this to the following by repeating the sequence $q_1,..., q_5$, i.e., $\{q_i\}=\{q_{j+5k}\mid 1\le j\le 5\}$.

\begin{thm}\label{higher-lens-1}
For any positive integer $N$, let $q_1=5; q_2=8; q_3=4; q_4=2; q_5=1$ and $q_{i+5j}=q_i$ for $1\le i\le 5$ and $1\le j\le N-1$. There is an $(2(5N)-1)$-dimensional $N_0$-space $M=T_hX$ which is the mapping torus of the lens space $X=L(11;q_1,..., q_{5N})$ where the homeomorphism $h:X\to X$ has degree $-1$ and $h_{\#}$ is multiplication by $2$. Moreover, every self homeomorphism of $M$ is isotopic to be fixed point free.
\end{thm}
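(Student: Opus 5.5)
The plan is to verify the hypotheses of Theorem \ref{PHZ-1} for the lens space $X=L(11;q_1,\dots,q_{5N})$ with $n=5N$ and $q=2$, and then run the same chain of results used for Theorem \ref{lens-space-3D}.

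\textbf{Step 1: the congruence $q^n\equiv -1$.} We have $2^5=32\equiv -1 \pmod{11}$. Hence $2^{5N}\equiv(-1)^N \pmod{11}$. This equals $-1$ only when $N$ is odd, which is the first point requiring care. For odd $N$ the degree $-1$ criterion of Theorem \ref{PHZ-1} applies directly.

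\textbf{Step 2: the permutation for odd $N$.} Take $\sigma$ to be the product of the $N$ disjoint $5$-cycles $(1+5j,\,2+5j,\,3+5j,\,4+5j,\,5+5j)$, for $0\le j\le N-1$. On each block the congruences $q_i\equiv \pm 2q_{\sigma(i)}$ are exactly the five listed before the theorem; the last one, $q_5\equiv -2q_1$, uses the allowed sign. Theorem \ref{PHZ-1} then gives a self-map of degree $-1$ with $f_*=\cdot 2$ homotopic to an orientation-reversing homeomorphism $h$, so $h_\#=\theta$ is multiplication by $2$.

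\textbf{Step 2$'$: even $N$ (the expected obstacle).} If $N$ is even, then $2^{5N}\equiv 1$, so there is no degree $-1$ realization with $q=2$. Here I would instead use Theorem \ref{PHZ1}, which needs $q^n\equiv 1$ and the same $\sigma$ with the same $\pm$ congruences. This yields a degree $+1$ homeomorphism $h$ with $h_\#=\cdot 2$.

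For Lemma \ref{lens} only $q^2\not\equiv \pm1 \pmod{11}$ is needed, and $4\not\equiv\pm1$. The remark preceding Theorem \ref{higher-lens1} records that Lemma \ref{lens} survives replacing $q^2\equiv -1$ by $q^2\not\equiv 1$. So in either parity $\pi_1(T_hX)\cong \mathbb Z_{11}\rtimes_\theta\mathbb Z$ has property $R_\infty$, using Remark \ref{abelian-aut}: $\theta$ has order $10$ in the abelian group ${\rm Aut}(\mathbb Z_{11})$. Consequently every automorphism induces the identity on the $\mathbb Z$ quotient. If the statement insists on degree $-1$, I would restrict to odd $N$, or note that the degree plays no role in the conclusion.

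\textbf{Step 3: conclusion.} Proposition \ref{mappingtorus} makes any self homotopy equivalence of $M=T_hX$ fibre preserving, with identity on the base. Theorem \ref{general-N_0} then shows $N(f)=0$, so $M$ is an $N_0$-space.

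Since $\dim M=(2\cdot5N-1)+1=10N\ge 10\ge5$ (the theorem's $2(5N)-1$ is the dimension of the fibre $X$), the vanishing of the Nielsen number yields, by the isotopy version of Wecken's theorem recalled in the abstract, an isotopy to a fixed point free map for every self homeomorphism.
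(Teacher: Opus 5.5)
Your argument is correct, and for odd $N$ it is exactly the paper's argument. The paper's whole proof is the remark that one repeats the block $5,8,4,2,1$. That is, it applies Theorem \ref{PHZ-1} with $n=5N$, $q=2$ and $\sigma$ a product of $5$-cycles, then uses the relaxed form of Lemma \ref{lens} and Theorem \ref{general-N_0}.

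Your parity observation is not a gap in your proof. It is a genuine error in the statement, which the paper's one-line argument overlooks. For even $N$, $5N$ is a multiple of $10=|\mathbb Z_{11}^{*}|$, so $q^{5N}\equiv 1 \pmod{11}$ for every unit $q$. By the first half of Theorem \ref{PHZ-1}, $X$ then admits no orientation-reversing homeomorphism at all, not merely none with $h_\#=\cdot 2$. So the degree $-1$ clause is false for even $N$.

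Your substitute for even $N$ is the right repair: a degree $+1$ homeomorphism with $h_\#=\cdot 2$, obtained from Theorem \ref{PHZ1} with the same $\sigma$. The $N_0$ and isotopy conclusions only use that $\cdot 2$ has order $10\ge 3$ in ${\rm Aut}(\mathbb Z_{11})$, so they hold for every $N$. You are also right that $2(5N)-1$ is $\dim X$, while $\dim M=10N$.

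One small slip: what Remark \ref{abelian-aut} requires is $q^2\not\equiv 1$, not $q^2\not\equiv \pm 1$. Indeed, Lemma \ref{lens} itself assumes $q^2\equiv -1$. The slip is harmless here, since $4\not\equiv\pm1 \pmod{11}$.
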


While we have constructed many $N_0$-spaces as in Theorem \ref{higher-lens1} and in Theorem \ref{higher-lens-1}, we do not know if any of these spaces can be of Jiang-type. Next, we focus on certain Jiang-type spaces with virtually cyclic fundamental groups.

\section{Non-aspherical (Jiang-type) $N_0$-spaces, part II}

Let $G$ be a compact connected Lie group; $K$ a finite subgroup. It follows from \cite{W} (see also \cite{GW3}) that $M=G/K$ is of Jiang-type. Moreover, it was shown in \cite{GW3} that
$$
0\to \pi_1(G) \to \pi_1(G/K) \to K \to 1
$$
is a central extension. In particular, when $G=U(n)$, $\pi_1(U(n)/K)$ is a virtually cyclic group of Type II so that Proposition \ref{type2} is applicable.

Given a finite subgroup $K\subset U(n)$ we begin by describing the fundamental group of the homogeneous space $U(n)/K$ (left cosets). The determinant map $\det:U(n)\to S^1$  provides a fibration 
$SU(n)\to U(n)\stackrel{\det} \to S^1$ where the fibre, the pre-image of  $1\in S^1$, is the subgroup  $SU(n)$. Observe that $\det(K)$ is a finite subgroup of $S^1$, therefore it is a finite cyclic group. 
By considering the coset spaces, we obtain  a fibration
$$SU(n)/K' \to U(n)/K \to  S^1/\det(K)$$
where $K'=K\cap SU(n)$ and  $S^1/\det(K)$ is homeomorphic to  $S^1$.   

The last $5$-terms of the long exact sequence in homotopy associated to the above fibration is:

$$ 1\to K' \to \pi_1(U(n)/K) \to \mathbb Z\to 1.$$
\noindent Further, this short exact sequence splits (since $\mathbb Z$ is free) and the action of a generator of  $\mathbb Z$ 
is obtained as follows:\\
 Choose a matrix  $A\in K$ such that $\det(A)$ is a generator of $\det(K)$ and let $\alpha \in K'$. The action of 
 the generator of $\mathbb Z=\pi_1(S^1/\det(K))$  defined by $A$ on $\alpha$ is given the conjugation
  $A\circ \alpha\circ A^{-1}$. In order to decide  if the group $K'\rtimes_{\theta} \mathbb Z$ has or not the $R_{\infty}$-property 
  we use Propostion \ref{type2}.

\subsection{Jiang-type $N_0$-spaces}

Let $n=5$. Consider the elements $A,B\in U(5)$ given by

$$A=\begin{pmatrix}
               \omega & 0 & 0 & 0 & 0  \\
               0 & 0 & 0 & \omega & 0  \\
               0 & \omega & 0 & 0 & 0  \\
               0 & 0 & 0 & 0 & \omega \\
               0 & 0 & \omega & 0 & 0                                       
                                                         \end{pmatrix}, \qquad B=\begin{pmatrix}
               0 & 0 & 0 & 0 & 1\\
               1 & 0 & 0 & 0 & 0 \\
               0 & 1 & 0 & 0 & 0 \\
               0 & 0 & 1 & 0 & 0 \\
               0 & 0 & 0 & 1 & 0                                        
                                                         \end{pmatrix}$$
                                                         
where $\omega^{20}=1$.

Let $K\le U(5)$ be

$$K=\langle A,B \mid A^{20}=B^5=I, A^4B=BA^4, ABA^{-1}=B^2 \rangle.
$$

Consider the subgroup $K'=\langle A^4, B\rangle \cong \mathbb Z_5 \oplus \mathbb Z_5$. It is easy to see that 

$$1\to K' \to K \to \mathbb Z_4 \to 1$$
is a short exact sequence of finite groups and $\mathbb Z_4$ acts on $K'$ via $\begin{pmatrix}
                                                         1 & 0 \\
                                                         0 & 2
                                                         \end{pmatrix}$.

It follows that

$$
1\to K' \to \pi_1(U(5)/K) \to \mathbb Z \to 1
$$
is a short exact sequence such that $\pi_1(U(5)/K)\cong K' \rtimes_{\theta} \mathbb Z$
with action $\theta$ given by $\theta(1) =\begin{pmatrix}
                                                         1 & 0 \\
                                                         0 & 2
                                                         \end{pmatrix}$. Note that $\theta(-1)=\begin{pmatrix}
                                                         1 & 0 \\
                                                         0 & 3
                                                         \end{pmatrix}$ is the inverse of $\theta(1)$.

It is straighforward to check that for any $V\in GL_2(\mathbb Z_5)$,
$$
\begin{pmatrix}
                                                         1 & 0 \\
                                                         0 & 2
                                                         \end{pmatrix}V \ne V\begin{pmatrix}
                                                         1 & 0 \\
                                                         0 & 3
                                                         \end{pmatrix}.$$

Since for any $k\ge 1$, we can embed $M\in U(n)$ in $U(n+k)$ as $\begin{pmatrix}
                                                                                        M & 0 \\
                                                                                        0 & I_k \end{pmatrix}$ where $I_k$ is the $k \times k$ identity matrix, we have the following.

\begin{thm}\label{unitary} For any $n\ge 5$, there exists a finite group $K_n \le U(n)$ such that $M(n)=U(n)/K_n$ is a Jiang-type, $R_{\infty}$, $N_0$-space. Furthermore, every self homeomorphism of $M(n)$ is isotopic to  be fixed point free. Here $K_5=K$ as described above.
\end{thm}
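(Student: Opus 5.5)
The plan is to take $K_n=\iota_n(K)$, where $\iota_n:U(5)\to U(n)$ is the block embedding $M\mapsto\begin{pmatrix} M&0\\0&I_{n-5}\end{pmatrix}$. I then check four things in turn: Jiang-type, $R_\infty$ for $\pi_1$, $N_0$, and isotopy to a fixed point free map.

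For Jiang-type, $K_n$ is a finite subgroup of the compact connected Lie group $U(n)$, so $U(n)/K_n$ is of Jiang-type by \cite{W} (see also \cite{GW3}), as recalled at the start of this section.

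For $R_\infty$, I would redo the computation of the fundamental group for $n\ge 5$. The embedding preserves determinants, so $\det(K_n)=\det(K)$ and $K_n'=K_n\cap SU(n)=\iota_n(K')\cong\mathbb Z_5\oplus\mathbb Z_5$. Since $\iota_n$ is a homomorphism, conjugation by $\iota_n(A)$ on $K_n'$ is the same action as conjugation by $A$ on $K'$. The fibration $SU(n)/K_n'\to U(n)/K_n\to S^1/\det(K_n)$ then gives $\pi_1(U(n)/K_n)\cong K'\rtimes_\theta\mathbb Z$ with $\theta(1)=\mathrm{diag}(1,2)$.

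Here I must be careful that $A$ is chosen with $\det A$ generating $\det(K)$. From the matrix, $\det A=\pm\omega^5$, up to the sign of the permutation. Since $\omega^{20}=1$ and $K/K'\cong\mathbb Z_4$ is generated by the image of $A$, the image $\det(A)$ should generate $\det(K)$. If $\det(K)$ were generated by some other element, the action would be a power of $\mathrm{diag}(1,2)$, and the argument below would have to be rechecked for that power.

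Next, because $K'$ is abelian, ${\rm Inn}(K')$ is trivial, so ${\rm Out}(K')={\rm Aut}(K')=GL_2(\mathbb Z_5)$. The displayed non-equality $\mathrm{diag}(1,2)V\ne V\mathrm{diag}(1,3)$ for all $V\in GL_2(\mathbb Z_5)$ says exactly that $[\theta(1)]$ is not conjugate to its inverse in ${\rm Out}(K')$. I would justify this quickly: conjugate matrices have equal eigenvalue multisets, and $\{1,2\}\ne\{1,3\}$ in $\mathbb Z_5$. Proposition \ref{type2} then gives that $\pi_1(M(n))$ has property $R_\infty$, and hence $M(n)$ is an $R_\infty$-space.

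For the $N_0$ property, let $f$ be any self homotopy equivalence. Then $R(f)=\infty$. Since $M(n)$ is of Jiang-type, either $L(f)=0$ and so $N(f)=0$, or $L(f)\ne0$ and $N(f)=R(f)=\infty$. The second case is impossible because $N(f)$ is always finite, so $N(f)=0$ and $M(n)$ is an $N_0$-space.

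For the final claim, $M(n)$ is a closed smooth manifold of dimension $n^2\ge 25\ge 5$, and a self homeomorphism is a homotopy equivalence, so $N(f)=0$. The result quoted in the abstract (vanishing Nielsen number implies isotopic to a fixed point free map in dimension $\ge 5$) then applies. That result is stated for diffeomorphisms, so for homeomorphisms I would invoke the same statement in the form used in \cite{GW2} for homeomorphisms. Extending the cited isotopy theorem from diffeomorphisms to homeomorphisms is the main point needing care. The other delicate step is confirming that $\theta(1)$ is given by $A$, i.e. that $\det A$ generates $\det(K)$.
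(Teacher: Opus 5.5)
Your proposal is correct and follows the paper's route. You compute $\pi_1(U(5)/K)\cong K'\rtimes_\theta\mathbb Z$ with $\theta(1)=\mathrm{diag}(1,2)$, apply Proposition \ref{type2} (your eigenvalue comparison cleanly justifies the paper's ``straightforward to check''), transport everything to $U(n)$ via the block embedding, and combine Jiang-type with $R_\infty$ to get $N_0$ and the isotopy claim in dimension $n^2\ge 5$. The step you flag as delicate is immediate: $\det B=1$ gives $\det(K)=\langle\det A\rangle$.
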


\subsection{Another Example}

Now, we give another example of a finite subgroup $K\le U(7)$ with the desired property but $K$ is not of the form $\begin{pmatrix}
                                                                                        													K_5 & 0 \\
                                                                                        													0 & I_2 \end{pmatrix}$ as in Theorem \ref{unitary}.
													
Consider the elements $A,B\in U(7)$ given by

$$A=\begin{pmatrix}
               \omega & 0 & 0 & 0 & 0 & 0 & 0 \\
               0 & 0 & 0 & 0 & \omega & 0 & 0 \\
               0 & \omega & 0 & 0 & 0 & 0 & 0 \\
               0 & 0 & 0 & 0 & 0 & \omega & 0 \\
               0 & 0 & \omega & 0 & 0 & 0 & 0 \\
               0 & 0 & 0 & 0 & 0 & 0 & \omega \\
               0 & 0 & 0 & \omega & 0 & 0 & 0                                        
                                                         \end{pmatrix}, \qquad B=\begin{pmatrix}
               0 & 0 & 0 & 0 & 0 & 0 & 1\\
               1 & 0 & 0 & 0 & 0 & 0 & 0 \\
               0 & 1 & 0 & 0 & 0 & 0 & 0 \\
               0 & 0 & 1 & 0 & 0 & 0 & 0 \\
               0 & 0 & 0 & 1 & 0 & 0 & 0 \\
               0 & 0 & 0 & 0 & 1 & 0 & 0 \\
               0 & 0 & 0 & 0 & 0 & 1 & 0                                        
                                                         \end{pmatrix}$$
                                                         
where $\omega^{21}=1$.

Let $K\le U(7)$ be

$$K=\langle A,B \mid A^{21}=B^7=I, A^3B=BA^3, ABA^{-1}=B^2 \rangle.
$$

Consider the subgroup $K'=\langle A^3, B\rangle \cong \mathbb Z_7 \oplus \mathbb Z_7$. It is easy to see that 

$$1\to K' \to K \to \mathbb Z_3 \to 1$$
is a short exact sequence of finite groups and $\mathbb Z_3$ acts on $K'$ via $\begin{pmatrix}
                                                         1 & 0 \\
                                                         0 & 2
                                                         \end{pmatrix}$.

It follows that

$$
1\to K' \to \pi_1(U(7)/K) \to \mathbb Z \to 1
$$
is a short exact sequence such that $\pi_1(U(7)/K)\cong K' \rtimes_{\theta} \mathbb Z$
with action $\theta$ given by $\theta(1) =\begin{pmatrix}
                                                         1 & 0 \\
                                                         0 & 2
                                                         \end{pmatrix}$. Note that $[\theta(1)]^2=\begin{pmatrix}
                                                         1 & 0 \\
                                                         0 & 4
                                                         \end{pmatrix}$ is the inverse of $\theta(1)$.

We can check that there exists NO $V\in GL_2(\mathbb Z_7)$ such that 
$$
\begin{pmatrix}
                                                         1 & 0 \\
                                                         0 & 2
                                                         \end{pmatrix}V=V\begin{pmatrix}
                                                         1 & 0 \\
                                                         0 & 4
                                                         \end{pmatrix}.$$

\section{Finite  subgroups  of  $U(2)$}

We have constructed for each $n\ge 5$ a finite subgroup $K_n$ such that $\pi_1(M(n))$ has the  $R_{\infty}$-property where $M(n)=U(n)/K_n$. It is therefore natural to ask whether such examples can be found when $n=2,3,$ or $4$. In this section we show that for any finite subgroup $K\subset U(2)$, the fundamental group of  the homogeneous space $U(2)/K$ does not have the $R_{\infty}$-property. Moreover, we show that the space $U(2)/K$ is not an $R_{\infty}$-space.

\subsection{$\pi_1(U(2)/K)$ and the $R_{\infty}$-property} 

First, we briefly describe the classification by Du Val \cite{Du} and relate the subgroup $K'$ with the notations of finite subgroups used by Du Val.

For any $A\in U(2)$, the corresponding transformation $T_A$ sends $x \mapsto \epsilon x \kappa$ for some $\epsilon \in S^1$ (unit complex numbers) and $\kappa \in Sp(1)$ (unit quaternions), for every $x\in \mathbb C^2$. Equivalently,
\begin{equation}\label{DV}
A=\begin{pmatrix}
          \epsilon & 0 \\
          0 & \epsilon \end{pmatrix} \begin{pmatrix}
          						a & -\bar c \\
						c & \bar a \end{pmatrix} \end{equation}
where $a,c \in \mathbb C$, $a \bar a + c \bar c=1$, $\epsilon \in S^1$,  and $\kappa=a+cj \in Sp(1)$. Thus for every $A$, we can associate to it a pair $(\epsilon, \kappa) \in S^1 \times Sp(1)$. Note that the pair $(\epsilon, \kappa)$ is not unique since we can also associate to $A$  the pair
$(-\epsilon, -\kappa)$. In \cite{Du}, it was shown that any finite subgroup $K$ of $U(2)$ is determined by four groups as follows. Given $K$, let $R=\{r \mid \exists \ell \in S^1, (\ell, r)\in K\}$, $R_K=\{r \mid (1,r)\in K\}$. Similarly, let $L=\{\ell \mid \exists r\in Sp(1), (\ell, r)\in K\}$ and $L_K=\{\ell \mid (\ell, 1)\in K\}$. Moreover, $R/R_K \cong L/L_K$. Note that different isomorphisms $\overline{\phi}: L/L_K \to R/R_K$ may yield different finite subgroups $K$. According to \cite{Du}, every finite subgroup is denoted by $(L/L_K, R/R_K)$ together with an isomorphism $\overline{\phi}$ between $L/L_K$ and $R/R_K$ and the identification $(\epsilon, \kappa)\sim (-\epsilon, -\kappa)$. In other words, every finite subgroup is given by 
\begin{equation}\label{finiteK}
K=\{(\epsilon, \kappa)\in L\times R \mid \overline{\phi}(\overline{\epsilon})=\overline{\kappa}\}/\sim.
\end{equation}

Now, given a finite subgroup $K \subset U(2)$, we recall that $K'$ is the kernel of the determinant map $\det : K \to S^1$. Suppose $A\in K$. It follows from \eqref{DV} that $A\in K'$ iff $\epsilon^2=1$ or $A$ corresponds to $(1,\kappa)$. In other words, $K'$ is the subgroup $\{(1,r) \in K\}$ and hence $K' \cong R_K$.

For the list  of  all  finite groups of  $U(2)$, we will use  the classification given   in \cite[Chapter 10, section 10.1 page 98]{Co1}  and \cite[Theorem 2.2]{FaPa}. From there  the finite groups of unitary transformations in the plane are:
\begin{itemize}
\item[(1)] $(\textswab{C}_{2m}/\textswab{C}_f;  \textswab{C}_{2n}/\textswab{C}_g)_{d},$\ \ \  \ \ \ \ \ \  \ \ \ \ \ \ \ \ \ \ \ \ \ \ of order $gm=fn$
\item[(2)] $\langle p,2,2\rangle_m=(\textswab{C}_{2m}/\textswab{C}_{2m};  {\bf D}_{p}/{\bf D}_p),$ \ \ \ \ \ \ \ \ \ \ \ \ \ \ \ $4mp$ 
\item[(3)]   $(\textswab{C}_{4m}/\textswab{C}_{2m};  \langle p,2,2\rangle= {\bf D}_{p}/\textswab{C}_{2p})
,$ \ \ \ \ \ \ \ \ \ \ \ \ \ \ \  \ \ \ \ \ \ \ \ \ \ \ \ \ $4mp$ 

\item[(3')]   $(\textswab{C}_{4m}/\textswab{C}_{m};  \langle p,2,2\rangle={\bf D}_{p}/\textswab{C}_{p}), \ m \ and \ \ p\ \ odd,
$   \ \ \ \ \ $2mp$

\item[(4)]   $(\textswab{C}_{4m}/\textswab{C}_{2m};  \langle 2p,2,2\rangle={\bf D}_{2p}/ \langle p,2,2\rangle= {\bf D}_{p}),$ \ \ \ \ \ \  \ \ \ \ \ \ \ \ \ \ \ \ \ $8mp$ 

\item[(5)]   $ \langle 3,3,2\rangle_m=(\textswab{C}_{2m}/\textswab{C}_{2m};  {\bf T}/{\bf  T}),$   \ \ \ \ \ \ \ \  \ \ \ \ \ \ \ \ \ \ \ \ \ $24m$ 

\item[(6)]   $(\textswab{C}_{6m}/\textswab{C}_{2m};  \langle 3,3,2\rangle/\langle 2,2,2\rangle={\bf T}/{\bf Q}_8),$    \ \ \ \ \ \ \ \ \  \ \ \ \ \ \ \ \ \ \ \ \ \ $24m$ 

\item[(7)]   $ \langle 4,3,2 \rangle_m=(\textswab{C}_{2m}/\textswab{C}_{2m};  {\bf O}/{\bf O}),$   \ \    \ \ \ \ \  \ \ \ \ \ \ \ \ \ \ \ \ \ $48m$ 

\item[(8)]   $(\textswab{C}_{4m}/\textswab{C}_{2m};  \langle 4,3,2\rangle/\langle 3,3,2 \rangle={\bf O}/{\bf T}),$   \ \ \ \ \ \ \ \ \  \ \ \ \ \ \ \ \ \ \ \ \ \ $48m$ 

\item[(9)]   $ \langle 5,3,2 \rangle_m=(\textswab{C}_{2m}/\textswab{C}_{2m};  {\bf I}/{\bf  I}),$  \  \ \ \ \ \ \ \ \ \ \  \ \ \ \ \ \ \ \ \ \ \ \ \ $120m$ 

\end{itemize}
For the case (1)    the numbers $f,g,m,n,d$ are  positive integers such that $f\equiv g(mod(2)$, $gm=nf$, $f$ divides $2m$, $d$ is relatively prime to $2m/f$, and $1\leq d<m/f$. Then there is a  group $(\textswab{C}_{2m}/\textswab{C}_f;  \textswab{C}_{2n}/\textswab{C}_g)_{d},$ consisting  of the $gm=fn$ transformations 
\begin{equation}\label{abelian-K'}
e^{\mu \pi i/m}xe^{d\nu \pi i/n}
\end{equation}
where $\mu=0,1,\cdots, \lambda m-1$; $\nu=0,1,\cdots,n-1$; $\mu\equiv \nu(mod \ \ \lambda m/f)$, and $\lambda=$ 1 or 2 according as $f$ and $g$ are odd or even (see \cite[page 90 subsection 9.2]{Co1}.

The group denoted by $\langle p,q,r\rangle_{m}$ has the following presentation (see \cite[page 90 subsection 9.2 eq.9.32]{Co1}).
:\\
$$\langle  A, B, C, Z \ | \  
A^p=B^q=C^r=ABC=Z^m; \ \ [Z,A]=[Z,,B]=[Z,C]=1 \rangle.  $$
When $m=1$ we denote simply by   $\langle p,q,r\rangle$.  We  have the following identifications: $\langle p,2, 2\rangle={\bf D}_p; \langle 2p,2,2\rangle={\bf D}_{2p} \ (dicyclic \  groups);  \langle 2,2,2\rangle={\bf Q}_8$ \ ($quaternionic \ group \ of \ order \  eight$); $\langle 3,3,2\rangle={\bf T};  \langle 4,3,2\rangle={\bf O};  \langle 5,3,2\rangle={\bf I}.$  From the description of the finite subgroups given by the classification above, we promptly  read the subgroup $K'$. More precisely the subgroup $K'$ is as follows:
\begin{itemize}
\item [(1)] finite abelian subgroup
\item[(2)] ${\bf D}_p$
\item[(3)]  $\textswab{C}_{2p}$
\item[(3')] $\textswab{C}_{p}$
\item[(4)] $\langle p,2,2\rangle={\bf D}_p$
\item[(5)] $\langle 3,3,2 \rangle={\bf  T}$
\item[(6)] $\langle 2,2,2\rangle= \ quaternionic \ group \ of \ order \  eight={\bf Q}_8$ 
\item[(7)] $\langle 4,3,2 \rangle={\bf O}=\langle X, P,Q, R \ \ | \ \ X^3=1,  \ \ P^2=Q^ 2=R^2, \ \  XPX^{-1}=Q, \ \ XQX^{-1}=PQ, \ \  RXR^{-1}=X^{-1}, \ \  RPR^{-1}=QP,  \ \    RQR^{-1}= Q^{-1}\rangle$ 
\item[(8)]  $\langle 3,3,2\rangle={\bf T}$
\item[(9)]  $\langle 5,3,2 \rangle={\bf  I}$
\end{itemize}
The automorphism $\theta$  of $K'$ which  appears in the group $K'\rtimes_{\theta}\mathbb Z$    is obtained from the conjugation by an 
element of the group $K$ of the form $(e,r)$. Here, $e$ is a generator of the cyclic  group $L$.

Based upon the classification above, we divide the cases into four subfamilies. \\
 Let $P_1$   be the family of finite subgroups  given by  (2), (5), (7)  and (9).\\
 Let $P_2$   be the family of finite subgroups  given by (1).  \\
 Let $P_3$   be the family of finite subgroups  given by  (3), (3') and (4).\\
Let $P_4$   be the family of finite subgroups  given  by  (6) and (8).
\vskip1cm

\noindent  {\bf Case $P_1$:}

For this subfamily we make use of the fact that the finite subgroup is also given by 
a presentation. We will use this presentation to describe $K'$ and $\theta$. 
The particular case of $K={\bf I}$, case $(9)$,  in fact there is no group of the form ${\bf I}\rtimes \mathbb Z$ which has the $R_{\infty}$-property. This  follows from  the fact that     ${\rm Out}({\bf I})$ is isomorphic 
to $\mathbb Z_2$, so the result follows from Proposition \ref{type2}. 

Nevertheless we will give a proof which works for all four cases in  the subfamily $P_1$.
The subgroups in question are isomorphic to the group 
 $\langle r,s,t \rangle_m$, which by \cite{Co1} is the group defined by the following presentation
$$\langle A,B,C,Z \ | \  A^r=B^s=C^t=ABC=Z^m \ \ and \  \ Z \ \  commutes \ with  \ \ A,B,C \rangle.$$
The subgroups $K'$ are $\langle p,2,2 \rangle={\bf D}_p,   \langle 3,2,2\rangle={\bf T},   \langle 4,3,2 \rangle={\bf O},   \langle 5,3,2\rangle={\bf I}$,  respectively. The action $\theta$ is the conjugation by $Z$, so we have a central extention.
Therefore for each $K$ in this subfamily, $\pi_1(U(2)/K) \cong K'\times \mathbb Z$ which does  not have  the  $R_{\infty}$-property.    

\vskip0.5cm

\noindent {\bf Case $P_2$:}

 By the description of the finite groups in (1), $K\cong \mathbb Z_m \times \mathbb Z_g$ (or $\mathbb Z_{2m} \times \mathbb Z_{g/2}$) is Abelian. Therefore  the automorphism of $K'$ is the identity
and thus $\pi_1(U(2)/K) \cong  K' \times \mathbb Z$ which does not have the $R_{\infty}$-property.

\vskip0.5cm

\noindent {\bf Case $P_3$:}   

 For the case $(3)$, the automorphism $\theta(1)$ is given by  $\theta(1)(x)=x^ {-1}$ where  $\langle x\rangle=\textswab{C}_{2m}$. Since  $\theta(1)$ has order $2$, it follows from Proposition \ref{type2} that  $\textswab{C}_{2m}\rtimes_{\theta}\mathbb Z$ does not have the $R_{\infty}$-property.   The  case of $(3' )$ is similar where  the two possibilities  for  
$\theta(1)$ are in fact the  same, i.e.   $\theta(1)(x)=x^  {-1}$.  For  the  case $(4)$, let us consider  the presentation  ${\bf D}_{2p}=\langle x,y|x^{2p}=y^ 2 \ \  yxy^ {-1}=x^ {-1}\rangle$ 
 and the corresponding presentation for 
   ${\bf D}_{p}=\langle z,y |  z^{p}=y^2 \ \  yzy^ {-1}=z^ {-1}\rangle$, as well as the inclusion
     ${\bf D}_{p} \hookrightarrow {\bf D}_{2p}$ by sending   $z\mapsto x^ 2$ and   $y\mapsto y$.  Thus, up to an inner automorphism of  $ {\bf D}_{p}$, the automorphism 
     $\theta(1)$ is given by $z\mapsto xzx^ {-1}=z$,  $y   \mapsto xyx^ {-1}=xyx^ {-1}y^ {-1}y=x^2y=zy$. The inverse   $\theta(-1)$ is given by $z\mapsto z$,  $y   \mapsto z^{-1}y$.
     Note that the  two  automorphisms $\theta(1)$ and $\theta(-1)$  are conjugated by the automorphism  which is conjugation  by $z^ {p-1}$.

\vskip0.5cm

\noindent {\bf Case $P_4$:}

 If $K'={\bf Q}_8$ in (6) then ${\rm Out}(K')\cong S_3$, the symmetric group on $3$ letters. Now $[\theta(1)]$ is of order either $2$ or $3$ in ${\rm Out}(K')$ where $[\theta(1)]$ denotes the image of $\theta(1)$ in ${\rm Out}(K')$. In $S_3$, we can conclude that $\theta(1)$ and $\theta(-1)$ are conjugate. 
If $K'={\bf T}$ in (8), then ${\rm Out}(K')\cong \mathbb Z_2$ so that $[\theta(1)]=[\theta(-1)]$. Hence,  $K' \rtimes \mathbb Z$ do not have property $R_{\infty}$. Alternatively, one can find automorphisms $\psi_1:{\bf Q}_8  \to  {\bf Q}_8$ and $\psi_2:{\bf T}  \to  {\bf T}$ such that $\psi_i\circ\psi_i=Id$ for $i=1,2$.  Then it follows that  
 $\psi_i\circ \theta(1)\circ\psi_i^{-1}=\theta(-1)$ for $i=1,2$.  Hence,  $K' \rtimes \mathbb Z$ do not have property $R_{\infty}$.

We now summarize our discussion with the following.

\begin{thm}\label{gp-U2K}
For any finite subgroup $K$ of $U(2)$, $\pi_1(U(2)/K)$ does not have property $R_{\infty}$.
\end{thm}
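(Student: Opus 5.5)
The plan is to reduce the statement to Proposition \ref{type2} and then run through the Du Val/Coxeter classification family by family, as set up in the case analysis $P_1,\dots,P_4$ above.

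First, for any finite $K\subset U(2)$, the fibration $SU(2)/K'\to U(2)/K\to S^1/\det(K)$ gives the split extension
$$1\to K'\to \pi_1(U(2)/K)\to\mathbb Z\to 1.$$
Hence $\pi_1(U(2)/K)\cong K'\rtimes_\theta\mathbb Z$, where $\theta(1)$ is conjugation by an element $A\in K$ whose determinant generates $\det(K)$. By Proposition \ref{type2}, it then suffices to show that in every case $[\theta(1)]$ is conjugate to $[\theta(1)]^{-1}$ in ${\rm Out}(K')$. A convenient sufficient condition is that $[\theta(1)]$ is trivial or has order $2$. Next we identify $K'\cong R_K$ in each of the nine types (1)--(9), using the list of $K'$ already read off above, together with the conjugating element $(e,r)$, where $e$ generates $L$.

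The families are then handled as follows. In $P_1$ (types (2), (5), (7), (9)) the group is $\langle p,q,r\rangle_m$ and $\theta$ is conjugation by the central element $Z$. So $\theta$ is trivial and the group is $K'\times\mathbb Z$. In $P_2$ (type (1)) $K$ is abelian, so conjugation is again trivial. In $P_3$, types (3) and (3') have $K'$ cyclic and $\theta(1)$ equal to inversion, which has order $2$. In type (4) we write $\theta(1)$ on ${\bf D}_p\subset{\bf D}_{2p}$ explicitly as $z\mapsto z$, $y\mapsto zy$, and exhibit an automorphism conjugating it to $\theta(-1)\colon y\mapsto z^{-1}y$. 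In $P_4$ we have $K'={\bf Q}_8$ with ${\rm Out}\cong S_3$, and in $S_3$ every element is conjugate to its inverse. For $K'={\bf T}$ we have ${\rm Out}\cong\mathbb Z_2$, which is abelian of exponent $2$.

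The main obstacle is type (4). There ${\rm Out}({\bf D}_p)$ may be nonabelian and larger than $\mathbb Z_2$, so the generic order argument does not apply, and an explicit conjugating automorphism must be found. The automorphism inverting $z$ and fixing $y$ works only if it is well defined on ${\bf D}_p$, which requires care with the relation $z^p=y^2$. If that fails, I would instead use $y\mapsto z^{k}y$ with a suitable $k$, or observe that $\theta(1)$ and $\theta(-1)=\theta(1)^{-1}$ differ by an inner automorphism: conjugation by $z^{j}$ sends $y\mapsto z^{2j}y$, and $2j\equiv -2 \pmod{2p}$ has the solution $j=p-1$. This last observation would make $[\theta(1)]$ of order at most $2$ in ${\rm Out}({\bf D}_p)$ and settle the case.
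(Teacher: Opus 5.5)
Your proof is correct and follows the paper's own argument. You reduce to Proposition~\ref{type2} via $\pi_1(U(2)/K)\cong K'\rtimes_\theta\mathbb Z$, use the same case split $P_1$--$P_4$, and settle type (4) exactly as the paper does, by noting that $\theta(1)$ and $\theta(-1)$ differ by conjugation by $z^{p-1}$. Your hedging there is unnecessary: since $z^{2p}=1$, the map $z\mapsto z^{-1}$, $y\mapsto y$ is just conjugation by $y$, so it is well defined and conjugates $\theta(1)$ to $\theta(-1)$ outright in ${\rm Aut}({\bf D}_p)$; so, contrary to your remark, the order-at-most-$2$ argument does apply.
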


\subsection{The  space $U(2)/K$ and  the $R_{\infty}$-property}

As we have seen in section 2 that there are $R_{\infty}$-spaces $M$ where $\pi_1(M)$ does not have property $R_{\infty}$. Here we study the converse question. That is, we ask whether   the space $U(2)/K$ has the $R_{\infty}$-property for any finite  subgroup  $K$.   In  order to do  that, we consider the locally trivial fibration $SU(2)/K' \to U(2)/K \to S^1$ over  $S^1$ induced by the determinant map $U(2) \to S^1$. We will explicitly define a fibre-preserving self homeomorphism $\eta$ on $U(2)/K$ which in turn induces the map $r:S^1\to S^1$ of  degree  $-1$ given by  $r(z)=\bar  z$ (the complex  conjugation). Using the commutative diagram \eqref{exact}, we conclude that $R(\eta)<\infty$ so $U(2)/K$ is not an $R_{\infty}$-space.

 {\bf Case (1)} 
 
Let $\kappa \in Sp(1)$ be given by $\kappa =a+bi+cj+dk$ where $a,b,c,d \in \mathbb R$ and $\hat{\kappa}=a-bi-dj-ck$. Consider the map $\Theta : U(2) \to U(2)$ given by $(\epsilon, \kappa) \to (\epsilon^{-1}, \hat{\kappa})$. It is straightforward to check that the map $\kappa \mapsto \hat{\kappa}$ is an automorphism of $Sp(1)$ and thus $\Theta$ is an automorphism of $U(2)$. From the description of these subgroups which are finite abelian subgroups of $U(2)$, every element of $K$ is of the form \eqref{abelian-K'}. It follows that if $(\epsilon, \kappa)\in K$ then $\kappa=a+bi+0j+0k$ where $a^2+b^2=1$ so that $\hat \kappa=a-bi=\kappa^{-1}$. Thus, $(\epsilon^{-1}, \hat \kappa)=(\epsilon^{-1}, {\kappa}^{-1}) \in K$. In other words, $K$ is invariant under $\Theta$. It is easy to see that $K'$ is also invariant under $\Theta$. The quotient homeomorphism $\eta : U(2)/K \to U(2)/K$ induces the map $r:S^1\to S^1$ given by the complex conjugation $r(z)=\bar z$.

\vskip0.5cm

{\bf Cases (2) - (5), (7) - (9)}

In each of these cases, the index $[R:R_K]$ is either $1$ or $2$. Suppose $K$ is given by \eqref{finiteK} and $(\epsilon, \kappa)\in K$. Since $[R:R_K]\in \{1,2\}$, it follows that $(1,\kappa^2)\in K$. Since $(\epsilon^{-1}, \kappa^{-1})\in K$, $(\epsilon^{-1},\kappa)=(1,\kappa^2)\cdot (\epsilon^{-1}, \kappa^{-1})\in K$. Now, the map $F:U(2) \to U(2)$ given by $(\epsilon, \kappa) \to (\epsilon^{-1}, \kappa)$ is a group isomorphism which leaves $K$ and $K'$ invariant. Thus, the quotient homeomorphism $\eta$ on $U(2)/K$ induces the complex conjugation $r(z)=\overline{z}$ on $S^1$.

\vskip0.5cm

{\bf Case (6)}   

Let $\Theta :U(2)\to U(2)$ be the automorphism as in Case {\bf (1)}. The map $\kappa \mapsto \hat{\kappa}$ is an automorphism of $Sp(1)$ whose restriction to ${\bf Q}_8$ is given by $\psi: i \mapsto -i$,   $j \mapsto -k$,   $k \mapsto -j$. Note that $R={\bf T}=\{\pm1, \pm i, \pm j, \pm k, (1/2)(\pm 1  \pm i  \pm j  \pm k)\} \subset Sp(1)$. If $\kappa \in R_K={\bf Q}_8$ then $\hat \kappa \in {\bf Q}_8$. Suppose $\kappa \in {\bf T}-{\bf Q}_8$, then $\kappa=\pm a\pm bi \pm cj \pm dk$ where $|a|=|b|=|c|=|d|=1/2$. Since $|c|=|d|$, we conclude that $\hat \kappa =\kappa^{-1}$. Thus $K$ is invariant under $\Theta$. Note that in this case $K'={\bf Q}_8$ so the map $\Theta$ also keeps $K'$ invariant and it induces a self homeomorphism $\eta$ on $U(2)/K$ whose restriction on $X=SU(2)/K'$, denoted by $h$, induces on the fundamental group the automorphism $\psi$. Moreover, $\eta$ induces on $S^1$ the map $r(z)=\overline{z}$.

\vskip0.5cm

{\bf Case (3')}

Similar to Case (6), for any $\kappa =a+bi+cj+dk \in Sp(1)$, we define $\bar \kappa=a+bi-cj-dk$. Equivalently, if $\kappa =\alpha + \beta j, \alpha, \beta \in \mathbb C$, then $\bar \kappa = \alpha -\beta j$. The map $\varphi: \kappa \mapsto \bar \kappa$ is an automorphism of $Sp(1)$ so that $\Phi: U(2) \to U(2)$ given by $(\epsilon, \kappa) \mapsto (\epsilon^{-1}, \bar \kappa)$ is an automorphism. To obtain our conclusion as before, it suffices to show that the subgroups $K$ and $K'$ are invariant under $\Phi$. The group ${\bf D}_p$ of order $4p$ can be embedded in $Sp(1)$ and is generated by $u=e^{\pi i/p}$ and $w=j$ so that ${\bf D}_p=\langle u,w \mid u^{p}=w^2, wuw^{-1}=u^{-1} \rangle$. Now the subgroup $R_K=\langle u^2\rangle$. Suppose $(\epsilon, \kappa)\in K$. Since $\kappa \in R$, $\kappa=u^sw^t$. Note that 
$\varphi(u)=\varphi(e^{\pi i/p})=e^{\pi i/p}=u$ so $K'$ is invariant under $\Phi$. It follows that
\begin{equation*}
\begin{aligned}
\bar \kappa&=\varphi(u^sw^t)=u^s\varphi(w^t) \\
              &=u^s(-j)^{t}
\end{aligned}
\end{equation*}
which implies that
$$\bar \kappa=\begin{cases}
                               \kappa , \text{~for $t=0$}; \\
                               u^{s}(-j)=\kappa^{-1} , \text{~for $t=1$}; \\
                               \kappa , \text{~for $t=2$}; \\
                               u^{s}(j)=\kappa^{-1} , \text{~for $t=3$}; \\
                               \end{cases}
                               $$
It follows that $(\epsilon^{-1}, \bar \kappa)=(\epsilon^{-1}, \kappa^{-1}) \in K$ when $t$ is odd. When $t$ is even, $\kappa^2=u^{2s}\in R_K$ so $(1, \kappa^2)\in K$. In this case, 
$(\epsilon^{-1}, \bar \kappa)=(\epsilon^{-1}, \kappa)=(\epsilon^{-1}, \kappa^{-1})\cdot (1,\kappa^2)\in K$. Therefore, $K$ is invariant under $\Phi$.

\begin{rem}
In all of the cases except {\bf (6)} and {\bf (3')}, we constructed a homeomorphism $f:U(2)/K \to U(2)/K$ with $R(f)<\infty$ without specifying how we embed $K$ inside $U(2)$. For the other two cases, we use the fact that any two isomorphic finite subgroups of $Sp(1)$ are conjugate in $Sp(1)$ (see e.g. \cite[p.54]{Du} or \cite[p.68]{Co1}) so that the result  above is  independent of the embedding $K\hookrightarrow U(2)$   since $U(2)/K$ is unique.
\end{rem}

Thus, we have shown the following

\begin{thm}\label{sp-U2K}
For any finite subgroup $K$ of $U(2)$, the space $U(2)/K$ does not have property $R_{\infty}$.
\end{thm}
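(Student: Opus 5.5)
The plan is to show, for each finite subgroup $K\subset U(2)$, that there is a self homeomorphism $\eta$ of $U(2)/K$ with $R(\eta)<\infty$. The homeomorphism will come from a Lie group automorphism $\Psi$ of $U(2)$ with $\Psi(K)=K$. We work with the locally trivial fibration
$$SU(2)/K'\to U(2)/K\to S^1/\det(K)\cong S^1$$
induced by the determinant, and with the short exact sequence $1\to K'\to \pi_1(U(2)/K)\to \mathbb Z\to 0$ from Section 4. The argument after diagram \eqref{exact} applies verbatim to $\pi_1(U(2)/K)\cong K'\rtimes_\theta\mathbb Z$, because $K'$ is finite and hence characteristic. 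It shows that an automorphism has finite Reidemeister number exactly when the induced map on the quotient $\mathbb Z$ is $-1_{\mathbb Z}$. So it suffices to make $\eta$ fibre preserving and covering complex conjugation $r(z)=\bar z$ on the base circle. Then $\eta$ is a homotopy equivalence with $R(\eta)<\infty$, and $U(2)/K$ is not an $R_\infty$-space.

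To build $\Psi$, I use Du Val's description $A\leftrightarrow(\epsilon,\kappa)\in S^1\times Sp(1)$, modulo $(\epsilon,\kappa)\sim(-\epsilon,-\kappa)$, so that $\det A=\epsilon^2$. Any map $(\epsilon,\kappa)\mapsto(\epsilon^{-1},\alpha(\kappa))$ with $\alpha\in{\rm Aut}(Sp(1))$ is a well-defined automorphism of $U(2)$. It respects $\sim$ because $\alpha(-1)=-1$, and it satisfies $\det\Psi(A)=\overline{\det A}$. It therefore descends to $U(2)/K$ and covers $r$ as soon as $\Psi(K)=K$. Invariance of $K'=K\cap SU(2)$ then follows automatically. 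The work is thus to choose $\alpha$ case by case from the nine-item list (1)--(9) so that $K$ is preserved.

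\emph{Case (1).} $K$ is abelian and every $\kappa$ occurring lies in $\mathbb C\subset Sp(1)$. I take $\alpha(a+bi+cj+dk)=a-bi-dj-ck$, which is conjugation by a unit quaternion. On $\mathbb C$ it acts as $\kappa\mapsto\kappa^{-1}$, so $(\epsilon,\kappa)\mapsto(\epsilon^{-1},\kappa^{-1})\in K$.

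\emph{Cases where $[R:R_K]\le 2$.} Here I take $\alpha={\rm id}$. For $(\epsilon,\kappa)\in K$ we have $(1,\kappa^2)\in K$, and hence $(\epsilon^{-1},\kappa)=(1,\kappa^2)(\epsilon^{-1},\kappa^{-1})\in K$.

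\emph{Case (6)} ($R={\bf T}$, $R_K={\bf Q}_8$, index $3$). I use the same $\hat{\ }$ as in Case (1). I check directly that it preserves ${\bf Q}_8$. I also check that on ${\bf T}\setminus{\bf Q}_8$, where all coordinates have absolute value $1/2$, it equals inversion.

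\emph{Case (3')} ($R={\bf D}_p$, $R_K=\langle u^2\rangle$ with $u=e^{\pi i/p}$). I use $\alpha(\alpha_0+\beta j)=\alpha_0-\beta j$. It fixes $u$ and sends $u^sw^t$ to $\kappa^{-1}$ for odd $t$. For even $t$ it fixes $\kappa$, and then $\kappa^2\in R_K$ finishes the argument as above.

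The main obstacle is Cases (6) and (3'), where the index $[R:R_K]$ exceeds $2$ and an explicit $\alpha$ compatible with the isomorphism $\overline\phi$ is needed. These checks depend on a particular embedding of $R$ in $Sp(1)$. I will justify this with the fact that isomorphic finite subgroups of $Sp(1)$ are conjugate in $Sp(1)$. That fact makes $U(2)/K$ independent of the embedding up to homeomorphism. With $\eta$ constructed in every case, the theorem follows.
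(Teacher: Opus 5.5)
Your outline follows the paper's proof step by step. It uses the same fibration over $S^1/\det(K)$, the same criterion from diagram \eqref{exact}, the same automorphisms case by case ($\Theta$ in Case (1), $F$ when the index is at most $2$, $\Theta$ again in Case (6), $\Phi$ in Case (3')), and the same appeal to conjugacy of isomorphic finite subgroups of $Sp(1)$. Your Case (3') verification is correct.

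The Case (6) check fails as you state it, and the paper's text has the same slip ("since $|c|=|d|$, $\hat\kappa=\kappa^{-1}$"). The map $\kappa\mapsto\hat\kappa$ is \emph{not} inversion on ${\bf T}\setminus{\bf Q}_8$. We have $\hat\kappa=a-bi-dj-ck$ and $\kappa^{-1}=a-bi-cj-dk$, so they agree only when $c=d$; knowing $|c|=|d|$ is not enough. For example, $\kappa=\frac12(1+i+j-k)$ gives $\hat\kappa=\frac12(1-i+j-k)$, whereas $\kappa^{-1}=\frac12(1-i-j+k)$.

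The conclusion survives, because you only need $\hat\kappa\in\kappa^{-1}{\bf Q}_8$, i.e.\ that $\kappa\mapsto\hat\kappa$ induces inversion on $R/R_K={\bf T}/{\bf Q}_8\cong\mathbb Z_3$.
\begin{itemize}
\item When $c=d$ you do have $\hat\kappa=\kappa^{-1}$.
\item When $c=-d$ one checks directly that $\hat\kappa=i\kappa^{-1}i^{-1}$. Hence $\kappa\hat\kappa=(\kappa i\kappa^{-1})\,i^{-1}\in{\bf Q}_8$ by normality of ${\bf Q}_8$ in ${\bf T}$; in the example above, $\kappa\hat\kappa=j$.
\end{itemize}
In both cases $(\epsilon^{-1},\hat\kappa)=(\epsilon^{-1},\kappa^{-1})\,(1,\kappa\hat\kappa)\in K$ for every $(\epsilon,\kappa)\in K$, since $\kappa\hat\kappa\in{\bf Q}_8=R_K$.

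Conceptually, $\kappa\mapsto\hat\kappa$ is conjugation by $(j-k)/\sqrt2\in{\bf O}\setminus{\bf T}$, which is an outer automorphism of ${\bf T}$, and outer automorphisms of ${\bf T}$ invert ${\bf T}/{\bf Q}_8$. With this replacement for your Case (6) verification, the rest of your argument goes through as written.
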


\begin{rem}
Although Theorem \ref{sp-U2K} implies Theorem \ref{gp-U2K}, we used different arguments which may be useful in determining the $R_{\infty}$-property of $\pi_1(U(n)/K)$ and of $U(n)/K$ respectively for $n=3,4$.
\end{rem}

\end{document}